\newcommand*{\suchthat}[1]{\,\left|\, #1 \right.}
\newcommand{\argmax}{\operatornamewithlimits{argmax}}
\newtheorem{theorem}{Theorem}[section] % 1st argument is your name for it
\newtheorem{lemma}[theorem]{Lemma}     % 2nd argument is what is printed
\newtheorem{corollary}[theorem]{Corollary}
\title[Divides Relation  Ordered by Prime Signatures]% end with percent
{Graph Invariants Based on the Divides Relation and Ordered by Prime Signatures}
\author{Sung-Hyuk Cha, Edgar G. DuCasse, and Louis V. Quintas}
\begin{document}
\maketitle

\begin{abstract}
Directed acyclic graphs whose nodes are all the divisors of a positive integer $n$ and arcs $(a,b)$ defined by  $a$ divides $b$ are considered.
Fourteen graph invariants such as order, size, and the number of paths are investigated for two classic graphs, the {\em Hasse diagram} $G^H(n)$ and its {\em transitive closure}  $G^T(n)$ derived from the divides relation partial order. 
Concise formulae and algorithms are devised for these graph invariants and several important properties of these graphs are formally proven.
Integer sequences of these invariants in natural order by $n$ are computed and several new sequences are identified by comparing them to existing sequences in the On-Line Encyclopedia of Integer Sequences. These new and existing integer sequences are interpreted from the graph theory point of view.
Both $G^H(n)$ and $G^T(n)$ are characterized by the prime signature of $n$. Hence, two conventional orders of prime signatures, namely the {\em graded colexicographic} and the {\em canonical} orders are considered and additional new integer sequences are discovered.

\end{abstract}

% use lowercase except for proper names

\section{Introduction} % use lowercase except for proper names
\label{s1}
Let $V(n)$ be the set of all positive divisors of a positive integer  $n$ as defined in~(\ref{e01}). 
For instance, $V(20) = \{1, 2, 4, 5, 10, 20\}$. 
The partial order called the  {\em divides} relation, $a$ divides $b$  denoted $a|b$, is applied to $V(n)$ and 
yields two types of directed acyclic graphs (henceforth referred  simply as graphs)  as shown in Figure~\ref{f01}.  
\begin{figure}[htb]
\begin{center}
\begin{tabular}{cc}
\resizebox{!}{1.2in}{\includegraphics{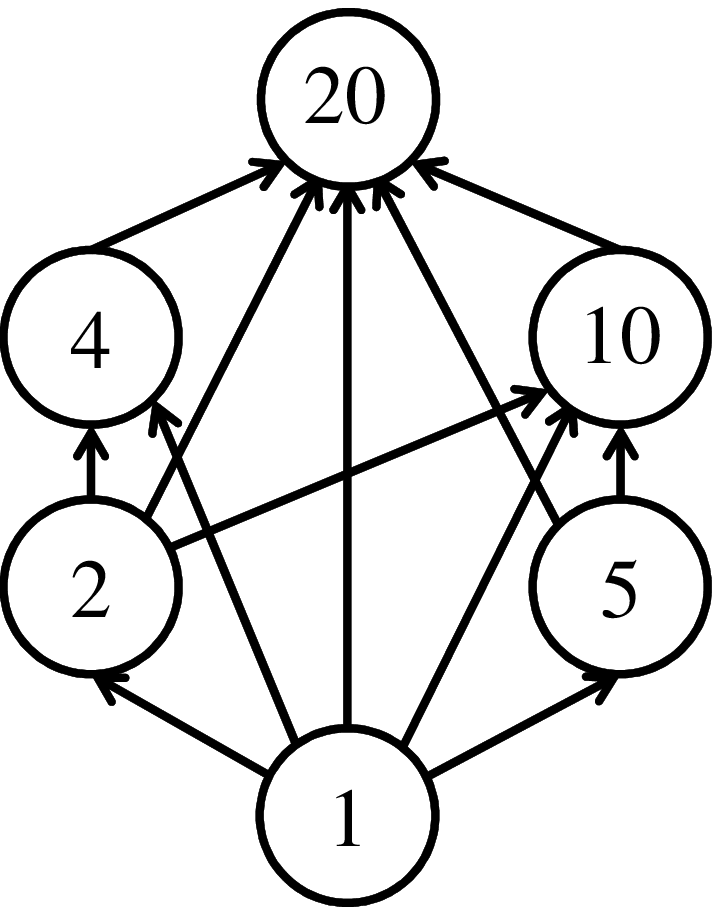}}
&
\resizebox{!}{1.2in}{\includegraphics{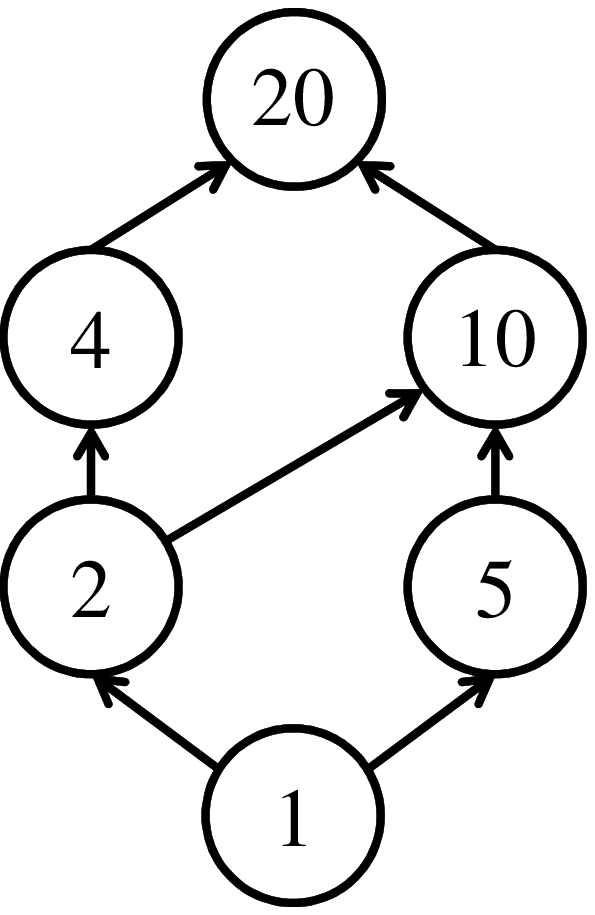}}\\
%\resizebox{!}{1.5in}{\includegraphics{f01c.eps}}\\
(a) Transitive Closure $G^T(20)$  & (b) Hasse diagram $G^H(20)$  
\end{tabular}
\end{center}
\caption{\label{f01} Two basic graphs derived from the divides relation.}
\end{figure}
The  first graph is called the {\em transitive closure}, $G^T(n) = (V(n), E^T(n))$ where 
\begin{equation}
V(n) = \{x  \suchthat{ x  \in Z^{+} \wedge x|n}\}
\label{e01}
\end{equation}
\begin{equation}
E^T(n) = \{(a,b)  \suchthat{a,b  \in V(n) \wedge a < b \wedge a|b}\}
\label{e02}
\end{equation}
Next, when all arcs in $G^T(n)$ with alternative transitive paths are excluded, the graph becomes a {\em Hasse diagram}  denoted as  $G^H(n) = (V(n), E^H(n))$  where $E^H(n)$ is defined in~(\ref{e03}).
\begin{equation}
E^H(n) = E^T(n) - \{(a,b) \in E^T(n) \suchthat{\exists c  \in V(n) (a < c <  b \wedge a|c \wedge c|b)}\}
\label{e03}
\end{equation}
Figures~\ref{f01} (a) and (b) show the {\em Transitive Closure} $G^T(20)$ and  {\em Hasse diagram} $G^H(20)$ , respectively. 
Note that $G^H(n) = G^T(n)$ if and only if $n$ is a prime. 
Numerous integer sequences have been discovered from the divides relation from the number theory point of view (see ~\cite{oeis}). 
 In Section~\ref{s2},   this paper not only compiles 
various existing integer sequences in~\cite{oeis}, but also discovers numerous integer sequences from the graph theory point of view, mainly from $G^H(n)$ and $G^T(n)$.

By the {\em Fundamental Theorem of Arithmetic}, every positive integer $n > 1$ can be represented by $\omega$ distinct prime numbers $p_1, p_2, \cdots, p_\omega$ and positive integers $m_1, m_2, \cdots, m_\omega$ as corresponding exponents such that $n = p_1^{m_1} p_2^{m_2} \cdots  p_\omega^{m_\omega}$ where $p_1 < p_2 < \cdots < p_\omega$. Let $M(n)=  (m_1, m_2, \cdots, m_\omega)$  be the sequence of the exponents.
In~\cite{HW1979},  Hardy and Wright used $\Omega(n)$  and $\omega(n)$  to denote the number of prime divisors of $n$ counted with multiplicity and the number of distinct prime factors of $n$, respectively. 
For  example,  $20 = 2 \times 2 \times 5 = 2^2 \times 5^1$ has $\Omega(20) = 3$ and $\omega(20) = 2$. 

%Hardy, G. H. and Wright, E. M. ``The Number of Prime Factors of $n$'' and ``The Normal Order of $\omega(n)$ and %$\Omega(n)$.''  pp. 354-358, 1979.  

Let $M'(n) =  [m_1, m_2, \cdots, m_\omega]$ be the {\em multiset} known as the {\em prime signature} of $n$ where the order does not matter and repetitions are allowed.
For example, $M'(4500 = 2^2\times3^2\times5^3) = [2, 2, 3]$ has the same  prime signature as $M'(33075 = 3^3\times5^2\times7^2) = [3,2,2]$.  
The prime signature $M'(n)$ uniquely determines the structures of $G^H(n)$ and $G^T(n)$ and  play a central role in this work as they partition the $G^H(n)$ and $G^T(n)$   into isomorphism classes and 
are used as the labels of the nodes of   $G^H(n)$ and $G^T(n)$ .

Any ordering of the prime signatures corresponds to an ordering of the isomorphism classes of  $G^H(n)$ and $G^T(n)$   and consequently of their associated graph invariants, such as their order, size, and path counts. 
Two kinds of orderings of prime signatures such as the {\em graded colexicographic} and 
{\em canonical orderings} appear  in the literature and the On-line Encyclopedia of Integer Sequences~\cite{oeis}.
Several integer sequences by prime signatures  have been  studied from the number theory point of view~\cite{AS1972,HW1979}, 
the earliest one of which  dates from 1919~\cite{MacMahon1919}. 
However, some sequences have interpretations different from the graph theory interpretations provided here. 
Most importantly, over twenty new integer sequences of great interest are presented in Section~\ref{s3}.

\section{Graph Theoretic Properties and Invariants of the Divides Relation} % use lowercase except for proper names
\label{s2}
In this section, fourteen graph invariants such as order, size, degree, etc. for the {\em Hasse Diagram} and/or {\em Transitive Closure} graphs are formally defined and investigated. Furthermore, various graph theoretic properties are also determined.

The first graph invariant of interest is the common  {\em order} of  $G^H(n)$ and $G^T(n)$, i.e., the number of nodes, $|V(n)|$. By definition,  this is simply the number of divisors of  $n$. 
\begin{theorem}[Order of $G^H(n)$ and $G^T(n)$]
\label{Tmorder}
\begin{equation}
|V(n)| = |V(M(n))|= \prod_ {m_i \in M(n)}(m_i + 1)
\end{equation}
\end{theorem}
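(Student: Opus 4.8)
The plan is to establish a bijection between the divisors of $n$ and the tuples of exponents bounded termwise by $M(n)$, and then count those tuples directly. First I would invoke the Fundamental Theorem of Arithmetic, already recalled in the introduction: writing $n = p_1^{m_1} p_2^{m_2} \cdots p_\omega^{m_\omega}$ with $p_1 < p_2 < \cdots < p_\omega$ distinct primes, every divisor $d \mid n$ factors uniquely as $d = p_1^{a_1} p_2^{a_2} \cdots p_\omega^{a_\omega}$ with $0 \le a_i \le m_i$ for each $i$, and conversely every such choice of exponents yields a divisor of $n$. This gives a bijection between $V(n)$ and the product set $\{0, 1, \ldots, m_1\} \times \cdots \times \{0, 1, \ldots, m_\omega\}$.

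Next I would count that product set. Each coordinate $a_i$ ranges over the $m_i + 1$ values $0, 1, \ldots, m_i$, and the choices in distinct coordinates are independent, so by the multiplication principle the number of tuples is $\prod_{i=1}^{\omega}(m_i + 1) = \prod_{m_i \in M(n)}(m_i + 1)$. Since $V(n)$ is in bijection with this set, $|V(n)|$ equals the same product. The equality $|V(n)| = |V(M(n))|$ is then immediate from the observation, already noted in the text, that the prime signature determines the structure of the graph: the order depends only on the multiset of exponents, not on which primes carry them, so one may legitimately write the invariant as a function of $M(n)$ alone.

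The main obstacle here is essentially expository rather than mathematical: the result is a classical divisor-counting fact, so the real work is stating the bijection cleanly and making sure the degenerate cases are covered — in particular $n = 1$, where $\omega = 0$, $M(n)$ is the empty sequence, the empty product is $1$, and indeed $V(1) = \{1\}$ has one element. I would also remark that $|V(M(n))|$ is well-defined precisely because permuting the entries of $M(n)$ permutes the factors of the product without changing its value, which is what licenses passing to the prime signature $M'(n)$. No inductive argument or case analysis beyond the $n=1$ check is needed; the uniqueness half of the Fundamental Theorem of Arithmetic does all the heavy lifting in guaranteeing the map is injective.
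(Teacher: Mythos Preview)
Your proof is correct and follows essentially the same approach as the paper: both arguments count divisors by observing that each prime $p_i$ contributes $m_i+1$ independent choices of exponent and then invoke the product rule of counting. Your version is simply more explicit---spelling out the bijection, invoking uniqueness from the Fundamental Theorem of Arithmetic, and handling the $n=1$ edge case---whereas the paper compresses this into two sentences.
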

\begin{proof}
Each $p_i^{m_i}$ term contains $m_i + 1$ factors which can  contribute to a divisor of $n$. Thus, the number of divisors of $n$ is $(m_1+1)\times(m_2+1)\times\cdots\times(m_\omega+1)$ by the {\em product rule of counting}.  
\end{proof}
This classic and important integer sequence of $|V(n)|$ in natural order is given in Table~\ref{t01} and listed as  A000005 in~\cite{oeis}.
Table~\ref{t01} lists 14  integer sequences of all forthcoming graph invariants with OEIS number if listed and blank in the OEIS column if not listed.
\begin{table}[hbtp]\vspace*{-3ex}
\caption[]{divides relation graph invariants in natural order} 
\label{t01}
\centering
{\footnotesize
\begin{tabular}{cp{3.6in}c} \hline
\multicolumn{1}{c}{Invariant} &
\multicolumn{1}{c}{Integer sequence for $n = 1,\cdots,50$}  &
\multicolumn{1}{c}{OEIS} \\ \hline \hline
$|V(n)|$ & 1, 2, 2, 3, 2, 4, 2, 4, 3, 4, 2, 6, 2, 4, 4, 5, 2, 6, 2, 6, 4, 4, 2, 8, 3, 4, 4, 6, 2, 8, 2, 6, 4, 4, 4, 9, 2, 4, 4, 8, 2, 8, 2, 6, 6, 4, 2, 10, 3, $\cdots$   & A000005\\ \hline
$|E^H(n)|$ &0, 1, 1, 2, 1, 4, 1, 3, 2, 4, 1, 7, 1, 4, 4, 4, 1, 7, 1, 7, 4, 4, 1, 10, 2, 4, 3, 7, 1, 12, 1, 5, 4, 4, 4, 12, 1, 4, 4, 10, 1, 12, 1, 7, 7, 4, 1, 13, $\cdots$   & A062799\\ \hline
$\Omega(n)$ &0, 1, 1, 2, 1, 2, 1, 3, 2, 2, 1, 3, 1, 2, 2, 4, 1, 3, 1, 3, 2, 2, 1, 4, 2, 2, 3, 3, 1, 3, 1, 5, 2, 2, 2, 4, 1, 2, 2, 4, 1, 3, 1, 3, 3, 2, 1, 5, 2, 3, 2, $\cdots$   & A001222\\ \hline
$\omega(n)$ &0, 1, 1, 1, 1, 2, 1, 1, 1, 2, 1, 2, 1, 2, 2, 1, 1, 2, 1, 2, 2, 2, 1, 2, 1, 2, 1, 2, 1, 3, 1, 1, 2, 2, 2, 2, 1, 2, 2, 2, 1, 3, 1, 2, 2, 2, 1, 2, 1, 2, 2, 2, $\cdots$   & A001221 \\ \hline
$W_v(n)$ & 1, 1, 1, 1, 1, 2, 1, 1, 1, 2, 1, 2, 1, 2, 2, 1, 1, 2, 1, 2, 2, 2, 1, 2, 1, 2, 1, 2, 1, 3, 1, 1, 2, 2, 2, 3, 1, 2, 2, 2, 1, 3, 1, 2, 2, 2, 1, 2, 1, 2, $\cdots$   & A096825\\ \hline
$W_e(n)$ & 1, 1, 1, 1, 1, 2, 1, 1, 1, 2, 1, 3, 1, 2, 2, 1, 1, 3, 1, 3, 2, 2, 1, 3, 1, 2, 1, 3, 1, 6, 1, 1, 2, 2, 2, 4, 1, 2, 2, 3, 1, 6, 1, 3, 3, 2, 1, 3, 1, 3, $\cdots$   & -\\ \hline
$\Delta(n)$ &0, 1, 1, 2, 1, 2, 1, 2, 2, 2, 1, 3, 1, 2, 2, 2, 1, 3, 1, 3, 2, 2, 1, 3, 2, 2, 2, 3, 1, 3, 1, 2, 2, 2, 2, 4, 1, 2, 2, 3, 1, 3, 1, 3, 3, 2, 1, 3, 2, 3, $\cdots$   & -\\ \hline
$|P^H(n)|$ &  	1, 1, 1, 1, 1, 2, 1, 1, 1, 2, 1, 3, 1, 2, 2, 1, 1, 3, 1, 3, 2, 2, 1, 4, 1, 2, 1, 3, 1, 6, 1, 1, 2, 2, 2, 6, 1, 2, 2, 4, 1, 6, 1, 3, 3, 2, 1, 5, 1, 3, 2, 3, $\cdots$   & A008480\\ \hline
$|V_E(n)|$ & 1, 1, 1, 2, 1, 2, 1, 2, 2, 2, 1, 3, 1, 2, 2, 3, 1, 3, 1, 3, 2, 2, 1, 4, 2, 2, 2, 3, 1, 4, 1, 3, 2, 2, 2, 5, 1, 2, 2, 4, 1, 4, 1, 3, 3, 2, 1, 5, 2, 3, $\cdots$   &A038548\\ \hline
$|V_O(n)|$ &0, 1, 1, 1, 1, 2, 1, 2, 1, 2, 1, 3, 1, 2, 2, 2, 1, 3, 1, 3, 2, 2, 1, 4, 1, 2, 2, 3, 1, 4, 1, 3, 2, 2, 2, 4, 1, 2, 2, 4, 1, 4, 1, 3, 3, 2, 1, 5, 1, 3, $\cdots$   & A056924\\ \hline
$|E_E(n)|$ & 0, 1, 1, 1, 1, 2, 1, 2, 1, 2, 1, 4, 1, 2, 2, 2, 1, 4, 1, 4, 2, 2, 1, 5, 1, 2, 2, 4, 1, 6, 1, 3, 2, 2, 2, 6, 1, 2, 2, 5, 1, 6, 1, 4, 4, 2, 1, 7, 1, 4, $\cdots$   & - \\ \hline
$|E_O(n)|$ & 0, 0, 0, 1, 0, 2, 0, 1, 1, 2, 0, 3, 0, 2, 2, 2, 0, 3, 0, 3, 2, 2, 0, 5, 1, 2, 1, 3, 0, 6, 0, 2, 2, 2, 2, 6, 0, 2, 2, 5, 0, 6, 0, 3, 3, 2, 0, 6, 1, 3, $\cdots$   & -\\ \hline
$|E^T(n)|$ & 0, 1, 1, 3, 1, 5, 1, 6, 3, 5, 1, 12, 1, 5, 5, 10, 1, 12, 1, 12, 5, 5, 1, 22, 3, 5, 6, 12, 1, 19, 1, 15, 5, 5, 5, 27, 1, 5, 5, 22, 1, 19, 1, 12, 12, 5, $\cdots$   & - \\ \hline
$|P^T(n)|$ & 1, 1, 1, 2, 1, 3, 1, 4, 2, 3, 1, 8, 1, 3, 3, 8, 1, 8, 1, 8, 3, 3, 1, 20, 2, 3, 4, 8, 1, 13, 1, 16, 3, 3, 3, 26, 1, 3, 3, 20, 1, 13, 1, 8, 8, 3, 1, 48, 2, $\cdots$   & A002033  \\ \hline
\end{tabular} }
\end{table}

%\begin{lemma}[Parity of $|V(n)|$] 
%$|V(n)|$ is odd if and only if  $\forall m_i \in M(n)$ are even.  
%\label{Tmoddo}
%\end{lemma}
%\begin{proof}
%Suppose there exists $m_x$ which is odd. Then the $m_x + 1$ term is even and  thus $|V(n)|$ is even since  $|V(n)|$ is product of $m_i + 1$ terms as in Theorem~\ref{Tmorder}.
%\end{proof}

The next eleven graph invariants of interest are for $G^H(n)$ exclusively. 
The second graph invariant of interest is the {\em size} of $G^H(n)$ which is the cardinality of the arc set $|E^H(n)| = |E^H(M(n))|$. 
A recursive algorithm to compute $|E^H(n)|$ is given in Algorithm~\ref{Ahsize} which utilizes a size fact about the {\em Cartesian product} of two graphs.  
\begin{algorithm}[Size of $G^H(n)$] 
Let $m_i \in M'$ and the multiset, $M = M'(n)$ initially. 
\label{Ahsize}
\begin{equation}
|E^H(M)|=  
\left\{
  \begin{array}{l l}
   |E^H(M - \{m_i\})|\times(m_i+1) + m_i\times|V(M- \{m_i\})| &  \textrm{if }  |M| > 1 \\
    m_1 &  \textrm{if }  |M| = 1
  \end{array} \right.
\end{equation}
\end{algorithm}
\begin{theorem}[Algorithm~\ref{Ahsize} correctly computes $|E^H(n)|$]  
\end{theorem}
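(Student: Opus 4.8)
The plan is to identify $G^H(n)$, up to isomorphism, with a Cartesian product of paths — one path on $m_i+1$ vertices for each exponent $m_i\in M'(n)$ — and then to read off the recursion of Algorithm~\ref{Ahsize} from the elementary formula for the number of arcs of a Cartesian product, with the vertex counts supplied by Theorem~\ref{Tmorder}. Throughout, for graphs $G,H$ I write $G\sqcap H$ for their (directed) Cartesian product: the graph on vertex set $V(G)\times V(H)$ with an arc from $(u,v)$ to $(u',v')$ exactly when either $u=u'$ and $(v,v')$ is an arc of $H$, or $v=v'$ and $(u,u')$ is an arc of $G$.

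First I would establish the structural lemma: if $n=n_1n_2$ with $\gcd(n_1,n_2)=1$, then $G^H(n)\cong G^H(n_1)\sqcap G^H(n_2)$. This rests on two facts. By the Fundamental Theorem of Arithmetic, the map $d\mapsto(\gcd(d,n_1),\gcd(d,n_2))$ is an order isomorphism from $(V(n),{\mid})$ onto the product poset $(V(n_1),{\mid})\times(V(n_2),{\mid})$, because $a\mid b$ in $V(n)$ holds if and only if $\gcd(a,n_j)\mid\gcd(b,n_j)$ for $j=1,2$. Secondly, by~(\ref{e03}) the arcs of $G^H(n)$ are precisely the covering pairs of $(V(n),{\mid})$, and the covering pairs of a product of finite posets $P\times Q$ are exactly those pairs in which $(b_1,b_2)$ covers $(a_1,a_2)$ with one coordinate fixed and the other a covering pair of its factor. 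The substantive direction is: if $(a_1,a_2)<(b_1,b_2)$ with $a_1<b_1$ and $a_2<b_2$, then $(a_1,b_2)$ lies strictly in between, so a covering pair can differ in only one coordinate; and if that coordinate, say the second, has $a_2<b_2$ but $(a_2,b_2)$ is not a covering pair of $Q$, then any $c$ with $a_2<c<b_2$ gives $(a_1,a_2)<(a_1,c)<(a_1,b_2)$, again violating the covering property. Composing the order isomorphism with this description of covers yields the claimed graph isomorphism.

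Applying the lemma with $n_2=p_i^{m_i}$ and $n_1=n/p_i^{m_i}$ gives $G^H(M)\cong G^H(M-\{m_i\})\sqcap L$, where $M=M'(n)$ and $L$ is the Hasse diagram of the chain $1\mid p_i\mid p_i^2\mid\cdots\mid p_i^{m_i}$, i.e.\ a path with $m_i+1$ vertices and $m_i$ arcs. For any graphs $G,H$ one has $|E(G\sqcap H)|=|V(G)|\cdot|E(H)|+|E(G)|\cdot|V(H)|$, since the arc set of $G\sqcap H$ partitions into $|V(G)|$ disjoint copies of $E(H)$ (the arcs with fixed first coordinate) and $|V(H)|$ disjoint copies of $E(G)$ (the arcs with fixed second coordinate), these classes being disjoint because no arc fixes both coordinates. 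Taking $G=G^H(M-\{m_i\})$ and $H=L$, so $|E(H)|=m_i$ and $|V(H)|=m_i+1$, this becomes
\[
|E^H(M)|=|E^H(M-\{m_i\})|\times(m_i+1)+m_i\times|V(M-\{m_i\})|,
\]
which is precisely the recursive case of Algorithm~\ref{Ahsize}, the factor $|V(M-\{m_i\})|$ being evaluated by Theorem~\ref{Tmorder}. For the base case $M=\{m_1\}$ we have $n=p_1^{m_1}$, whose divisors form a chain, so $G^H(n)$ is a path with $m_1$ arcs, matching the stated value; the case $n=1$ (empty multiset) is the one-vertex graph with no arcs. Since each recursive call removes one element of the multiset, the recursion terminates, and an induction on $|M|$ (equivalently on $\omega(n)$) shows that the algorithm returns $|E^H(n)|$.

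The one place I expect real work is the cover-description step inside the structural lemma — showing cleanly that a covering pair of a product poset moves in exactly one coordinate, and there only by a cover — which is exactly where the intermediate elements $(a_1,b_2)$ and $(a_1,c)$ do their job. Everything after that is routine manipulation of the product formulas for order and size.
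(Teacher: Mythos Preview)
Your proof is correct and follows essentially the same approach as the paper: both rest on the fact that $G^H(n)$ is (isomorphic to) the Cartesian product of paths, together with the standard formula for the size of a Cartesian product, from which the recursion of Algorithm~\ref{Ahsize} is read off inductively. The paper simply asserts the product-of-paths structure and cites~\cite{Harary1972} for the size formula, whereas you supply full arguments for both (via the coprime-factorization lemma and the cover-pairs-in-product-posets description), so your version is a strictly more detailed execution of the same strategy.
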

\begin{proof}
In~\cite{Harary1972}, a theorem about the size of the {\em Cartesian product} of two graphs is given, i.e.,   the size of a {\em Cartesian product} of two graphs is the size of the first multiplied by the order of the second added to the size of the second multiplied by the order of the first.  
Using this theorem and the fact that $G^H(n)$ is isomorphic to the Cartesian product of paths, 
%( see~\cite{EQ2014}), 
it is clear inductively that the recursive Algorithm~\ref{Ahsize} correctly computes the size of $G^H(n)$. 
\end{proof}
The integer sequence of $|E^H(n)|$ is listed as A062799 with an alternative formula  and described as the {\em inverse M\"{o}bius transform} of the number of distinct prime factors of $n$ in~\cite{oeis}.

For the purpose of illustrating the various concepts that are defined in what follows  $G^H(540)$ is shown in Figure~\ref{f02}.
Note that $540 = 2^23^35$ and that the nodes of  $G^H(540)$  are labeled with the sequence of exponents with respect of the order of  $M(n)$. Each node $v  \in V(n)$ is expressed as a sequence, $M_n(v) = (v_1,\cdots, v_{\omega(n)})$ where $0 \le v_i \le m_i$. 
\begin{definition}[Node as a sequence] If $v \in V(n)$ and $n = p_1^{m_1}p_2^{m_2} \cdots p_\omega^{m_\omega}$, then 
\begin{equation}
v = p_1^{v_1}p_2^{v_2} \cdots p_\omega^{v_\omega} \textrm{ and } M_n(v) = (v_1, v_2, \cdots, v_\omega)   
\end{equation}
\end{definition}
To minimize clutter in Figure~\ref{f02} the sequences $(2,3,1), (2,3,0), \cdots, (0,0,0)$ are written \newline
2 3 1, 2 3 0, $\cdots$, 0 0 0.
\begin{figure}[htb]
\centering
%\resizebox{!}{2.9in}{\includegraphics{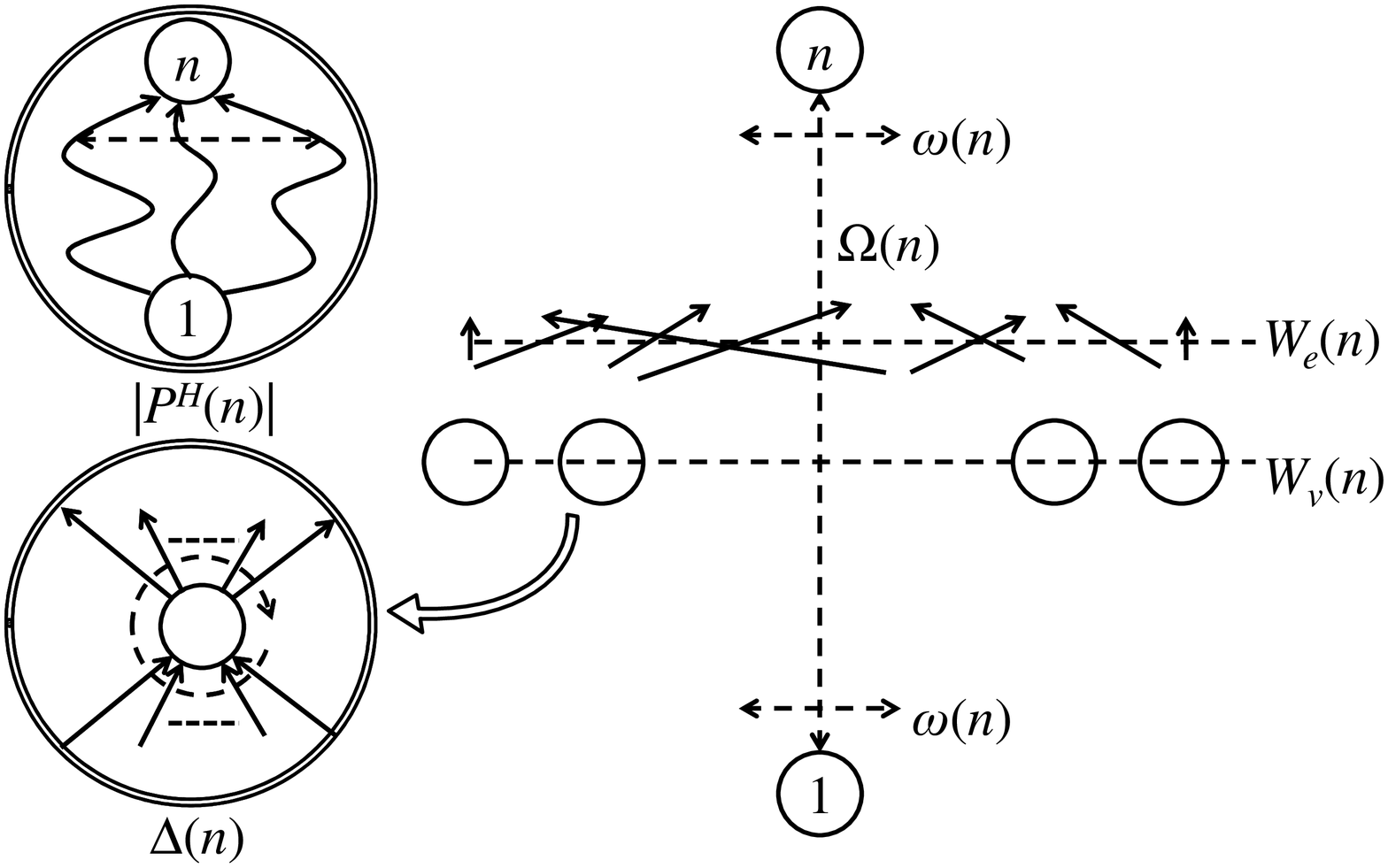}}
\includegraphics[scale=.5]{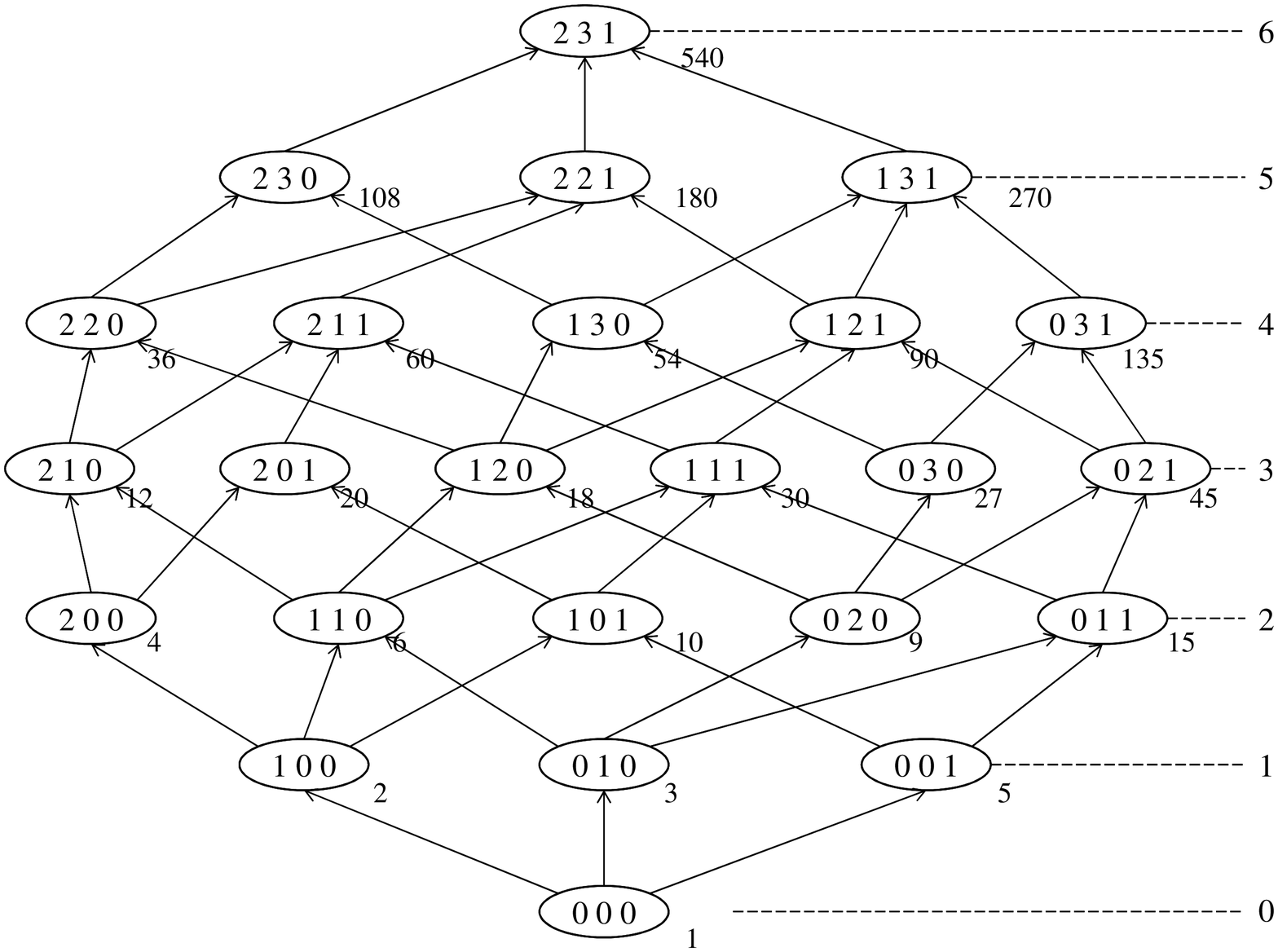}
\caption{\label{f02} $G^H(540) = G^H(M(540)) = G^H((2,3,1))$.}
\end{figure}

Let $V_l(n)$ denote the set of nodes lying in the $l$ level of the  decomposition of $G^H(n)$. For example in Figure~\ref{f02}, $V_5(540) = \{108, 180, 270\}$.  
\begin{lemma}[The sum of the prime signature of a node equals its level]
\label{Lmlevel}
\begin{equation}
V_l(n) = \left\{v \in V(n) \suchthat{\sum_{v_i \in M_n(v)}v_i = l}\right\}
\end{equation}
\end{lemma}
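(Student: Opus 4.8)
The plan is to identify the ``level'' of a node with the length of any directed path in $G^H(n)$ from the minimum node $1=(0,\dots,0)$ up to that node, and to show that traversing a single arc increases the coordinate-sum $\sum_i v_i$ by exactly $1$. Once that is known, every directed path from $1$ to $v$ has length exactly $\sum_{v_i\in M_n(v)}v_i$, the decomposition into levels is unambiguous, and $v$ lies at level $l$ precisely when $\sum_i v_i=l$.

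The key step is a description of the arcs of the Hasse diagram: I claim $(a,b)\in E^H(n)$ if and only if $M_n(b)-M_n(a)=e_i$ for some $i$, i.e.\ $b=a\,p_i$. For the ``if'' direction, if $b=a\,p_i$ then $b/a$ is prime, so the only divisors of $b$ that are multiples of $a$ are $a$ and $b$; hence no $c$ with $a<c<b$, $a\mid c$, $c\mid b$ exists, and $(a,b)$ survives into $E^H(n)$ by~(\ref{e03}). For the ``only if'' direction, suppose $(a,b)\in E^H(n)$; then $a\mid b$ and $a<b$, so $b/a>1$, and if $b/a$ were composite we could pick a prime $p\mid b/a$ and set $c=ap$, giving $a<c<b$ with $a\mid c\mid b$, a contradiction. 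Thus $b/a$ is prime. (The same fact is implicit in the earlier observation that $G^H(n)$ is isomorphic to a Cartesian product of paths, in which adjacency means differing by $1$ in a single coordinate.)

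Granting the arc description, the lemma follows by induction on $l$. The base case is $V_0(n)=\{1\}$, where the coordinate-sum is $0$. For the inductive step, a node $v\in V_l(n)$ has an in-neighbour $u\in V_{l-1}(n)$ with $v=u\,p_i$, so $\sum_i v_i=\sum_i u_i+1=l$ by the inductive hypothesis; conversely, if $\sum_i v_i=l>0$, choose an index $i$ with $v_i>0$ and set $u=v/p_i$, so that $(u,v)\in E^H(n)$ and $u$ has coordinate-sum $l-1$, hence $u\in V_{l-1}(n)$ and therefore $v\in V_l(n)$.

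The one genuine obstacle is making ``the $l$ level'' well defined in the first place: a priori a node could be reached from $1$ by directed paths of different lengths, so that no single number deserves to be called its level. The arc description is exactly what rules this out --- since every arc is a unit step in the coordinate-sum, all directed $1$-to-$v$ paths share the common length $\sum_i v_i$ --- and after that step everything is routine.
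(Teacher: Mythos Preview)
Your proof is correct and follows essentially the same route as the paper's, though you make explicit two points the paper leaves implicit: the characterization of $E^H(n)$ as the set of pairs $(a,ap_i)$, and the consequent well-definedness of levels. The paper's one-sentence argument works top-down --- writing $v=n/x$ with $x$ a product of $\Omega(n)-l$ primes --- whereas you work bottom-up from $1$ by induction, but the underlying idea (each Hasse arc changes the exponent-sum by exactly one) is the same.
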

\begin{proof}
If $v \in V(n)$, then $v = n/x$, where $x$ is the product of $\Omega(n) - l$ primes (multiplicities counted) contained in 
$\{p_1, p_2, \cdots p_w\}$. Thus, the nodes in $V_l(n)$ are precisely the nodes with signature sum $\sum_{v_i \in M_n(v)}v_i = l$.
\end{proof}
\begin{observation} Nodes partitioned by their level. 
\begin{equation}
V_{l_1}(n) \cap V_{l_2}(n)  = {\O}   \textrm{ if }  l_1 \not= l_2 \wedge l_1, l_2 \in \{0,..,\Omega(n)\}
\end{equation}
\begin{equation}
 V(n)  = \bigcup_{l \in \{0,..,\Omega(n)\}} V_l(n) 
\end{equation}
\begin{equation}
|V(n)|   =  \sum_{l \in \{0,..,\Omega(n)\}}|V_l(n)| 
\end{equation}
\end{observation}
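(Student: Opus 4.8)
The plan is to obtain all three identities as immediate consequences of Lemma~\ref{Lmlevel}, which identifies $V_l(n)$ with the set of divisors $v$ of $n$ whose exponent sequence $M_n(v)$ sums to $l$. Write $\sigma(v) = \sum_{v_i \in M_n(v)} v_i$ for $v \in V(n)$. Because $0 \le v_i \le m_i$ for every $i$, we have $0 \le \sigma(v) \le \sum_{i} m_i = \Omega(n)$, so $\sigma$ takes its values precisely in $\{0, \ldots, \Omega(n)\}$; moreover, by Lemma~\ref{Lmlevel}, $v \in V_l(n)$ if and only if $\sigma(v) = l$. The three displayed equations then assert, respectively, that the sets $V_l(n)$ are pairwise disjoint, that they cover $V(n)$, and that $|V(n)|$ is the sum of the $|V_l(n)|$.

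First I would establish disjointness: if $v \in V_{l_1}(n) \cap V_{l_2}(n)$ then $l_1 = \sigma(v) = l_2$, so distinct levels $l_1 \ne l_2$ force $V_{l_1}(n) \cap V_{l_2}(n) = \emptyset$. Next I would establish the covering identity: each $V_l(n) \subseteq V(n)$ holds by definition, so the union is contained in $V(n)$; conversely, every $v \in V(n)$ satisfies $\sigma(v) \in \{0, \ldots, \Omega(n)\}$ and hence belongs to $V_{\sigma(v)}(n)$, so $V(n)$ is contained in the union, giving equality. Finally, the cardinality identity is the standard counting principle that a finite set which is the union of pairwise disjoint subsets has cardinality equal to the sum of the cardinalities of those subsets, applied to the partition of $V(n)$ furnished by the first two equations.

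I do not anticipate any genuine difficulty here. The one point that merits a line of verification is that the index set $\{0, \ldots, \Omega(n)\}$ is exactly the right range — it must contain every value attained by $\sigma$ and should not be artificially enlarged — which is precisely the chain of inequalities $0 \le \sigma(v) \le \Omega(n)$ noted above, together with the remark that $\sigma$ does attain both endpoints, namely $\sigma(1) = 0$ and $\sigma(n) = \Omega(n)$.
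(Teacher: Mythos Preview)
Your argument is correct and is exactly the natural deduction from Lemma~\ref{Lmlevel}: the map $\sigma$ is well-defined into $\{0,\ldots,\Omega(n)\}$, and the three claims are just the statement that the fibers of $\sigma$ partition $V(n)$. The paper records this as an observation without proof, so your write-up simply makes explicit what the paper leaves as an immediate consequence of the lemma.
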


Let $P(x,y)$ be the set of  paths from  node $x$ to node $y$ in a directed acyclic graph where each path is a sequence of arcs from $x$ to $y$. For example in $G^H(20)$ as shown in Figure~\ref{f01} (b),   
$P(1,20) = \{\langle(1,2),(2,4),(4,20)\rangle, \langle(1,2),(2,10),(10,20)\rangle, \langle(1,5),(5,10),(10,20)\rangle\}$. 
Let $sp(x,y)$ and $lp(x,y)$ be the lengths of  the shortest path and longest path from $x$ to  $y$. 
Let $G(n)$ be a directed acyclic graph with a single source node, $1$ and a single sink node, $n$. 
Let $sp(G(n))$ and $lp(G(n))$ be the lengths of the shortest path and longest path from $1$ to  $n$, respectively. 
For simplicity sake, we shall denote $P^H(n)$ and $P^T(n)$ for  $P(1,n)$ in $G^H(n)$ and $G^T(n)$, respectively.

The height of $G^H(n)$  is the maximum level in the level decomposition  of $G^H(n)$, namely   the number of prime factors.
\begin{theorem}[Height of $G^H(n)$] 
\label{Tmdepth}
\begin{equation}
height(G^H(n)) = sp(G^H(n)) = \sum_ {m_i \in M(n)}m_i  = \Omega(n)
\end{equation}
\end{theorem}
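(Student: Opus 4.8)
The plan is to establish the chain of equalities essentially from right to left, using Lemma~\ref{Lmlevel} as the main tool. The rightmost equality $\sum_{m_i \in M(n)} m_i = \Omega(n)$ is immediate from the definition of $\Omega(n)$ recalled in Section~\ref{s1}, since the factorization $n = p_1^{m_1}\cdots p_\omega^{m_\omega}$ contributes exactly $m_i$ copies of the prime $p_i$ when prime divisors are counted with multiplicity. For $height(G^H(n)) = \Omega(n)$, recall that the height is the largest $l$ for which $V_l(n)$ is nonempty; by Lemma~\ref{Lmlevel} a node $v$ has level $\sum_{v_i \in M_n(v)} v_i$, and since $0 \le v_i \le m_i$ for every coordinate, the level of any node is at most $\sum_i m_i$, with equality attained at $v = n$ whose exponent sequence is $(m_1,\dots,m_\omega)$. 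Hence the maximal level is exactly $\sum_i m_i = \Omega(n)$.

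The substantive step is $sp(G^H(n)) = \Omega(n)$, and the key sublemma I would isolate is that every arc of the Hasse diagram is a ``covering'' arc that raises the level by exactly one. Concretely, if $(a,b) \in E^H(n)$ then $a \mid b$ and $a < b$, so the quotient $b/a$ exceeds $1$; were $b/a$ composite, it would have a prime factor $p$ with $1 < p < b/a$, and then $c = ap$ would satisfy $a < c < b$, $a \mid c$, $c \mid b$, contradicting the definition of $E^H(n)$ in~(\ref{e03}). Thus $b = ap$ for a prime $p$, so by the node-as-a-sequence representation the exponent vector of $b$ is that of $a$ with one coordinate increased by one, and by Lemma~\ref{Lmlevel} the level rises by exactly one along the arc.

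It then follows that every directed path from $1$ (which has exponent sequence $(0,\dots,0)$, hence level $0$) to $n$ (level $\Omega(n)$) consists of exactly $\Omega(n)$ arcs, because the level increases by one at each step and must rise a total of $\Omega(n) - 0$. Since such a path exists — one can multiply by one prime at a time to climb from $1$ to $n$, each such step being a covering relation by the same argument — the set $P^H(n)$ is nonempty, and every path in it has length $\Omega(n)$; in particular $sp(G^H(n)) = \Omega(n)$, and incidentally $lp(G^H(n)) = \Omega(n)$ as well. This closes the chain of equalities. The only real obstacle is the covering-arc sublemma of the second paragraph; once it is in hand, the rest is bookkeeping with Lemma~\ref{Lmlevel} and the observation that $1$ and $n$ sit at the extreme levels $0$ and $\Omega(n)$.
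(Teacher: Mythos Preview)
Your argument is correct and follows the same route as the paper, which simply records that the theorem ``follows directly from Lemma~\ref{Lmlevel}.'' You have merely made explicit what the paper leaves implicit: that Hasse-diagram arcs are covering relations (so each arc raises the level by exactly one) and that $1$ and $n$ occupy levels $0$ and $\Omega(n)$; with those facts in hand the chain of equalities is immediate, exactly as the paper intends.
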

\begin{proof}
Follows directly from  Lemma~\ref{Lmlevel}.
\end{proof}
\begin{corollary}[Length of Paths in $G^H(n)$ and $G^T(n)$]  
\label{Cldepth}
\begin{equation}
sp(G^H(n)) = lp(G^H(n)) = lp(G^T(n)) = \Omega(n)
\end{equation}
\end{corollary}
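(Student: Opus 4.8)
The plan is to combine Theorem~\ref{Tmdepth} with the observation that the level function of Lemma~\ref{Lmlevel} behaves in a controlled way along arcs of both graphs. First I would record the elementary fact that every arc $(a,b) \in E^H(n)$ has prime quotient, i.e.\ $b/a \in \{p_1,\dots,p_\omega\}$. Indeed, if $b/a$ were divisible by two (not necessarily distinct) primes $p,q$, then $c = ap$ would satisfy $a < c < b$, $a \mid c$, and $c \mid b$, contradicting the definition of $E^H(n)$ in~(\ref{e03}). Since multiplying a node by a single prime raises the signature sum by exactly $1$, Lemma~\ref{Lmlevel} shows that each arc of $G^H(n)$ goes from $V_l(n)$ to $V_{l+1}(n)$ for some $l$.

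Next, because $1 \in V_0(n)$ and $n \in V_{\Omega(n)}(n)$, any path from $1$ to $n$ in $G^H(n)$ increases the level by exactly $1$ at each step and therefore consists of exactly $\Omega(n)$ arcs. Hence $sp(G^H(n)) = lp(G^H(n)) = \Omega(n)$, which in particular recovers Theorem~\ref{Tmdepth}.

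For $lp(G^T(n))$ I would argue directly from~(\ref{e02}): a path from $1$ to $n$ in $G^T(n)$ corresponds to a strictly increasing chain of divisors $1 = d_0 \mid d_1 \mid \cdots \mid d_k = n$. At each step $d_{j-1} \mid d_j$ with $d_{j-1} < d_j$, so $d_j/d_{j-1}$ is an integer greater than $1$, whence the signature sum strictly increases; this forces $k \le \Omega(n)$. Conversely, the $\Omega(n)$-arc path exhibited in $G^H(n)$ above is also a path in $G^T(n)$, since $E^H(n) \subseteq E^T(n)$ by~(\ref{e02}) and~(\ref{e03}). Therefore $lp(G^T(n)) = \Omega(n)$, and all four quantities coincide.

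The only substantive step is the first paragraph, the identification of Hasse arcs with the ``multiply by one prime'' arcs; everything afterward is bookkeeping with Lemma~\ref{Lmlevel}. I do not anticipate a genuine obstacle, but one must be mildly careful that the chain argument for $G^T(n)$ relies on the strict inequality $d_{j-1} < d_j$ (so the quotient exceeds $1$) rather than on any Hasse-type minimality, since $G^T(n)$ contains ``shortcut'' arcs whose quotient need not be prime.
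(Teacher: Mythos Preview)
Your argument is correct and rests on the same idea as the paper, namely the level function of Lemma~\ref{Lmlevel}; the paper's own proof is the single sentence ``Follows directly from Lemma~\ref{Lmlevel}.'' You have simply made explicit the two facts the paper leaves implicit: that Hasse arcs raise the level by exactly one (whence every $G^H$ path from $1$ to $n$ has length $\Omega(n)$), and that arbitrary $G^T$ arcs raise it by at least one (whence no $G^T$ path exceeds length $\Omega(n)$, with equality witnessed by any Hasse path).
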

\begin{proof}
Follows directly from  Lemma~\ref{Lmlevel}.
\end{proof}
Note that $sp(G^T(n)) = 1$ since the arc with a single path, $(1,n) \in P^T(n)$.

\begin{theorem}[Symmetry of $V_l(n)$] 
\label{Tmsymmv}
\begin{equation}
|V_l(n)| = |V_{\Omega(n)-l}(n)|
\end{equation}
\end{theorem}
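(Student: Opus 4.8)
The plan is to prove this by exhibiting an explicit bijection between $V_l(n)$ and $V_{\Omega(n)-l}(n)$, namely the divisor-complementation map $\phi\colon V(n)\to V(n)$ defined by $\phi(v)=n/v$. First I would verify that $\phi$ is well defined and is an involution: if $v\mid n$, then $n/v$ is a positive integer dividing $n$, so $\phi(v)\in V(n)$, and $\phi(\phi(v))=n/(n/v)=v$, so $\phi$ is a self-inverse bijection of $V(n)$ onto itself.

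Next I would track the effect of $\phi$ on the exponent sequences. Writing $n=p_1^{m_1}\cdots p_\omega^{m_\omega}$ and $v=p_1^{v_1}\cdots p_\omega^{v_\omega}$ with $0\le v_i\le m_i$, unique factorization gives $n/v=p_1^{m_1-v_1}\cdots p_\omega^{m_\omega-v_\omega}$, so $M_n(\phi(v))=(m_1-v_1,\ldots,m_\omega-v_\omega)$. Therefore, using $\sum_i m_i=\Omega(n)$ from Theorem~\ref{Tmdepth},
\[
\sum_{i}(m_i-v_i)=\Omega(n)-\sum_{i}v_i .
\]
Now by Lemma~\ref{Lmlevel}, $v\in V_l(n)$ exactly when $\sum_i v_i=l$, and in that case the displayed identity shows $\sum_i(m_i-v_i)=\Omega(n)-l$, i.e.\ $\phi(v)\in V_{\Omega(n)-l}(n)$. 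Hence $\phi$ restricts to a map $V_l(n)\to V_{\Omega(n)-l}(n)$; the symmetric argument (or the fact that $\phi$ is an involution on $V(n)$) shows it also restricts to a map $V_{\Omega(n)-l}(n)\to V_l(n)$, and the two restrictions are mutually inverse. Consequently $\phi$ restricts to a bijection $V_l(n)\to V_{\Omega(n)-l}(n)$, which yields $|V_l(n)|=|V_{\Omega(n)-l}(n)|$.

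I do not expect a genuine obstacle here; the only point requiring a moment of care is that the image of $V_l(n)$ under $\phi$ is precisely $V_{\Omega(n)-l}(n)$ rather than merely contained in it, which is handled by the involution property carrying level-$l$ nodes to level-$(\Omega(n)-l)$ nodes in both directions. Equivalently, one may phrase the entire argument at the level of exponent sequences: coordinatewise complementation $(v_1,\ldots,v_\omega)\mapsto(m_1-v_1,\ldots,m_\omega-v_\omega)$ is a bijection of $\{(v_1,\ldots,v_\omega):0\le v_i\le m_i\}$ that sends coordinate-sum $l$ to coordinate-sum $\Omega(n)-l$, and by Lemma~\ref{Lmlevel} this is the claim.
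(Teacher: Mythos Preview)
Your proof is correct and is essentially the same as the paper's: the map $\phi(v)=n/v$ is exactly the paper's function $f(v)=p_1^{m_1-v_1}\cdots p_\omega^{m_\omega-v_\omega}$, and both proofs invoke Lemma~\ref{Lmlevel} to verify that this sends level $l$ to level $\Omega(n)-l$. The only cosmetic difference is that you argue bijectivity via the involution property $\phi\circ\phi=\mathrm{id}$, whereas the paper writes out the inverse map $g$ separately (with $g=f$); your phrasing is slightly cleaner but the content is identical.
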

\begin{proof}
A $1-1$ correspondence $f$ is defined between $V_l(n)$ and $V_{\Omega(n)-l}(n)$. 
Let $v$ be a node in $V_l(n)$ and  
$f$ the function from $V_l(n)$ to  $V_{\Omega(n)-l}(n)$  defined by 
\begin{equation}
f(v) = p_1^{m_1 - v_1} p_2^{m_2 - v_2}\cdots p_\omega^{m_\omega - v_\omega}
\label{ef}
\end{equation}
By Lemma~\ref{Lmlevel}, $f(v)$ is on level $\Omega(n) - l$ and $f$ is clearly $1-1$ into. 
Similarly, the function $g$ from   $V_{\Omega(n)-l}(n)$ to $V_l(n)$  defined by 
\begin{equation}
g(u) = p_1^{m_1 - u_1} p_2^{m_2 - u_2}\cdots p_\omega^{m_\omega - n_\omega}  \textrm{ where  } u \in V_{\Omega(n)-l}(n)
\label{eg}
\end{equation}
is  clearly $1-1$ into with $g(u)$ in $V_l(n)$. Thus, $g$ is $f^{-1}$ and $|V_l(n)| = |V_{\Omega(n)-l}(n)|$. 
\end{proof}

Let $E^H_l(n)$ be the set of arcs from nodes in level $l$ to  level $l+1$ and formally defined in Definition~\ref{dlev}.
\begin{definition}
\label{dlev}
\begin{equation}
E^H_l(n) = \{(a,b) \in E^H(n) | a \in V_l(n)\}
\end{equation}
\end{definition}
For example in  Figure~\ref{f02}, $E^H_0(540) = \{(1,2), (1,3), (1,5)\}$ and \newline
 $E^H_5(540) = \{(108,540),  (180,540),  (270,540)\}$.  
The following is a symmetry property of $E^H(n)$.
\begin{theorem}[Symmetry of $E^H_l(n)$] 
\label{Tmsymme}
\begin{equation}
|E^H_l(n)| = |E^H_{\Omega(n)-l-1}(n)|
\end{equation}
\end{theorem}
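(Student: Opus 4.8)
The plan is to mimic the proof of Theorem~\ref{Tmsymmv}, using the same complementation involution $f(v) = p_1^{m_1-v_1}p_2^{m_2-v_2}\cdots p_\omega^{m_\omega-v_\omega}$, but now tracking its effect on arcs rather than on nodes. The first step is to record the elementary description of the covering relation of the divides order: for $a,b\in V(n)$ one has $(a,b)\in E^H(n)$ if and only if $b = a\,p_j$ for some $j\in\{1,\dots,\omega\}$; equivalently, $M_n(b)$ agrees with $M_n(a)$ in every coordinate except the $j$-th, where $b_j = a_j+1$. Indeed, if $b/a$ had at least two prime factors counted with multiplicity, say $b/a = p_j k$ with $k>1$, then $c = a\,p_j$ would satisfy $a<c<b$, $a\mid c$, $c\mid b$, so $(a,b)$ would be deleted in passing from $E^T(n)$ to $E^H(n)$ by~(\ref{e03}); conversely, when $b = a\,p_j$ no such intermediate divisor $c$ exists. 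In particular, by Definition~\ref{dlev} and Lemma~\ref{Lmlevel}, every arc in $E^H_l(n)$ has its tail in $V_l(n)$ and its head in $V_{l+1}(n)$.

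Next I would introduce the map
\[
\varphi\colon E^H_l(n)\longrightarrow E^H_{\Omega(n)-l-1}(n),\qquad \varphi\bigl((a,b)\bigr) = \bigl(f(b),\,f(a)\bigr),
\]
and check that it is well defined. If $(a,b)\in E^H_l(n)$, then by the first step $a\in V_l(n)$, $b\in V_{l+1}(n)$, and $M_n(b)$ exceeds $M_n(a)$ by $1$ in a single coordinate $j$. By Lemma~\ref{Lmlevel} (exactly as used in Theorem~\ref{Tmsymmv}) we have $f(a)\in V_{\Omega(n)-l}(n)$ and $f(b)\in V_{\Omega(n)-l-1}(n)$. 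Complementing each exponent turns $b_j = a_j+1$ into $m_j - a_j = (m_j - b_j) + 1$, i.e.\ $f(a) = f(b)\,p_j$; hence $(f(b),f(a))\in E^H(n)$ by the first step, and since its tail $f(b)$ lies in $V_{\Omega(n)-l-1}(n)$ it indeed belongs to $E^H_{\Omega(n)-l-1}(n)$.

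Finally I would argue that $\varphi$ is a bijection. Since $f$ is an involution on $V(n)$ (in the notation of Theorem~\ref{Tmsymmv}, $g = f^{-1}$, and in fact $g = f$), the analogously defined map $\psi\colon E^H_{\Omega(n)-l-1}(n)\to E^H_l(n)$ given by $\psi\bigl((a',b')\bigr) = \bigl(f(b'),f(a')\bigr)$ is, by the same verification with $l$ replaced by $\Omega(n)-l-1$, well defined and is a two-sided inverse of $\varphi$. Hence $\varphi$ is invertible and $|E^H_l(n)| = |E^H_{\Omega(n)-l-1}(n)|$.

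The argument is essentially routine; the only point needing care is the first step — pinning down that the Hasse-diagram arcs are precisely the ``multiply by one prime'' edges — together with the level bookkeeping: an arc climbs one level, so under the order-reversing map $f$ its reversed image drops from level $\Omega(n)-l$ to level $\Omega(n)-l-1$, which is what puts it in $E^H_{\Omega(n)-l-1}(n)$ rather than $E^H_{\Omega(n)-l}(n)$.
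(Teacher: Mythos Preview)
Your proof is correct and follows essentially the same approach as the paper: both define the map $(a,b)\mapsto(f(b),f(a))$ using the complementation map $f$ from Theorem~\ref{Tmsymmv}, verify it lands in $E^H_{\Omega(n)-l-1}(n)$, and invert it with the analogous map in the other direction. The only cosmetic difference is that you first isolate the characterization of Hasse arcs as ``multiply by one prime'' and then check $f(a)=f(b)\,p_j$ coordinatewise, whereas the paper computes $f(a)/f(b)=b/a=p$ directly.
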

\begin{proof}
Let $a \in V_l(n)$ and $b \in V_{l+1}(n)$, and $(a,b)$ be an arc from $V_l(n)$ to  $V_{l+1}(n)$. 
Then, using $f$ in~(\ref{ef}), the function  $F$ defined by $F(a,b) = (f(b),f(a))$ provides a $1-1$ into function from 
$E^H_l(n)$ to $E^H_{\Omega(n)-l-1}(n)$. 
This is seen by noting that 
\begin{eqnarray}
f(b) & = & p_1^{m_1-b_1} p_2^{m_2-b_2} \cdots p_\omega^{m_\omega-b_\omega} \textrm{ is in } V_{\Omega(n) - l - 1}\\
f(a) & = & p_1^{m_1-a_1} p_2^{m_2-a_2} \cdots p_\omega^{m_\omega-a_\omega} \textrm{ is in } V_{\Omega(n) - l}\\
\frac{f(a)}{f(b)} & = & \frac{p_1^{m_1-a_1} p_2^{m_2-a_2} \cdots p_\omega^{m_\omega-a_\omega}}{p_1^{m_1-b_1} p_2^{m_2-b_2}\cdots p_\omega^{m_\omega-b_\omega}} \nonumber \\ 
& = & \frac{p_1^{m_1} p_2^{m_2} \cdots p_\omega^{m_\omega} p_1^{b_1} p_2^{b_2} \cdots p_\omega^{b_\omega}}
{p_1^{m_1} p_2^{m_2} \cdots p_\omega^{m_\omega}p_1^{a_1} p_2^{a_2} \cdots p_\omega^{a_\omega}} = \frac{b}{a} = p   \label{epf1}
\end{eqnarray}
Thus, from~(\ref{epf1}), since $(a,b)$ is an arc, $(f(b),f(a))$ is an arc from $V_{\Omega(n) - l - 1}$ to  $V_{\Omega(n) - l}$.
Therefore, $F$ provides a $1-1$ into function from $E^H_l(n)$ to $E^H_{\Omega(n)-l-1}(n)$. 
Similarly, the function $G$ defined by $G(c,d) = (g(d),g(c))$ is a $1-1$ into function 
from $E^H_{\Omega(n)-l-1}(n)$ to  $E^H_l(n)$ .  
Therefore, $|E^H_l(n)| = |E^H_{\Omega(n)-l-1}(n)|$. 
\end{proof}

All $G^H(n)$ have a single source node, $1$ and a single sink node, $n$. Thus $|V_0(n)| = |V_{\Omega(n)}(n)| = 1$.
There are two other special levels with $\omega(n)$ as their cardinalities. 
\begin{theorem}[Two special levels with $\omega(n)$ nodes] 
\label{Tmuniqp}
\begin{equation}
|V_{\Omega(n)-1}(n)|= |V_1(n)| = \omega(n) 
\end{equation}
\end{theorem}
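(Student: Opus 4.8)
The plan is to reduce the two claimed equalities to a single elementary counting statement via the symmetry already established. First I would invoke Theorem~\ref{Tmsymmv} with $l = 1$, which immediately gives $|V_1(n)| = |V_{\Omega(n)-1}(n)|$; this disposes of one of the two equalities for free, so the entire content of the theorem collapses to showing $|V_1(n)| = \omega(n)$.

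To prove $|V_1(n)| = \omega(n)$, I would appeal to Lemma~\ref{Lmlevel}, which identifies $V_1(n)$ with the set of divisors $v = p_1^{v_1}\cdots p_\omega^{v_\omega}$ whose signature satisfies $\sum_i v_i = 1$. Since each $v_i$ is a nonnegative integer, the constraint $\sum_i v_i = 1$ forces exactly one coordinate to equal $1$ and all others to equal $0$; equivalently, $v = p_i$ for some $i \in \{1,\ldots,\omega\}$. Because $n = p_1^{m_1}\cdots p_\omega^{m_\omega}$ with every $m_i \ge 1$ by the Fundamental Theorem of Arithmetic, each such $p_i$ genuinely lies in $V(n)$, and the $p_i$ are pairwise distinct. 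Hence the map $i \mapsto p_i$ is a bijection from $\{1,\ldots,\omega\}$ onto $V_1(n)$, giving $|V_1(n)| = \omega = \omega(n)$. Combining with the symmetry step yields $|V_{\Omega(n)-1}(n)| = |V_1(n)| = \omega(n)$.

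There is essentially no hard step here; the only points requiring a word of care are (i) noting that the nodes of level $1$ are exactly the prime divisors, not merely a subset of them, which is where $m_i \ge 1$ is used, and (ii) the degenerate cases $n = 1$ (where $\omega(n) = 0$, $\Omega(n) = 0$, and $V_1(1) = \emptyset$, with $V_{-1}$ read as empty) and $n$ prime (where $\Omega(n) - 1 = 0$ and $|V_0(n)| = 1 = \omega(n)$), both of which are consistent with the formula under the convention that $V_l(n) = \emptyset$ for $l \notin \{0,\ldots,\Omega(n)\}$. I would mention these briefly rather than belabor them, since the substantive claim is the bijection with the prime divisors.
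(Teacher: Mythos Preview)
Your proof is correct and follows essentially the same route as the paper: identify $V_1(n)$ with the set of distinct prime divisors of $n$ (so $|V_1(n)| = \omega(n)$), then invoke the level-symmetry Theorem~\ref{Tmsymmv} to obtain $|V_{\Omega(n)-1}(n)| = |V_1(n)|$. The paper's proof is a one-line version of exactly this argument; your added detail on the bijection and the degenerate cases is fine but not required.
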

\begin{proof}
$V_1(n)$ consists of the $\omega(n)$ distinct prime factors of $n$. By Theorem~\ref{Tmsymmv} $|V_1(n)| = |V_{\Omega(n)-1}(n)|  = \omega(n) $. 
\end{proof}

\begin{definition} Width of $G^H(n)$ in terms of nodes
\begin{equation}
W_v(n) = \max_{l \in \{0,..,\Omega(n)\}}|V_l(n)|
\end{equation}
\end{definition}
For example in Figure~\ref{f02}, $W_v(540) = 6$ at level $3$. 
The $W_v(n)$ sequence is listed as A096825, the maximal size of an {\em antichain} in a divisor lattice  in~\cite{oeis}. 
A different width can be defined in terms of arc cardinality in each level as depicted in Figure~\ref{f03}.  
\begin{definition} Width of $G^H(n)$ in terms of arcs
\begin{equation}
W_e(n) = \max_{l \in \{0,..,\Omega(n)-1\}}|E^H_l(n)|
\end{equation}
\end{definition}
\begin{figure}[htb]
\begin{center}
\resizebox{!}{2.4in}{\includegraphics{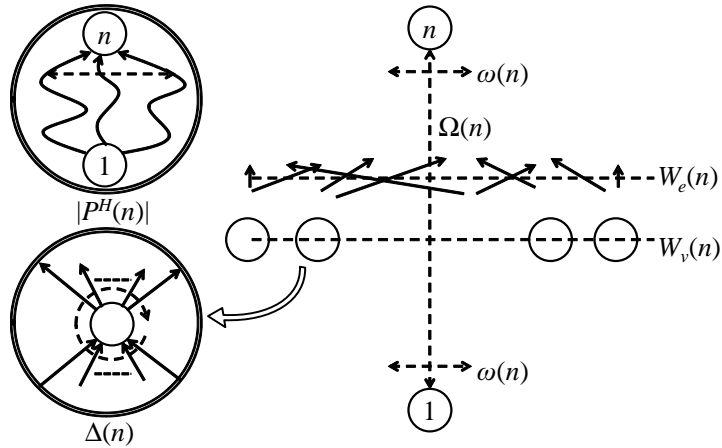}}
\end{center}
\caption{\label{f03} Anatomy of $(n)$.}
\end{figure}
For example in Figure~\ref{f02}, $W_e(540) = 12$ at levels $2$ and $3$. 
The $W_e(n)$ sequence does not appear in~\cite{oeis}.

Since $G^H(n)$ is a digraph, each node, $v$ has an {\em in-degree}, $\Delta^-(v)$, number of incoming arcs and an {\em out-degree}, $\Delta^+(v)$, number of outgoing arcs and the degree of $v$ is defined  $\Delta(v) = \Delta^+(v) + \Delta^-(v)$ . 
\begin{lemma}[Upper bound for indegrees and outdegrees]  
\label{Tmbinout}
For a node $v \in V(n)$, \newline
\[ \Delta^-(v) \le \omega(n), \Delta^+(v) \le \omega(n), \textrm{ and } \Delta(v) \le 2\omega(n)\]
\end{lemma}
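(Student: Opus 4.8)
\section*{Proof proposal}

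The plan is to identify the out-neighbours and in-neighbours of an arbitrary node $v \in V(n)$ directly from the definition of the Hasse arc set $E^H(n)$ in~(\ref{e03}), and then simply to count them. Write $v = p_1^{v_1}\cdots p_\omega^{v_\omega}$ with $0 \le v_i \le m_i$, where $n = p_1^{m_1}\cdots p_\omega^{m_\omega}$ and $\omega = \omega(n)$.

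First I would establish that a Hasse arc $(v,w)\in E^H(n)$ is exactly a covering relation, and that this forces $w = v\,p_i$ for a single prime $p_i$ with $v_i < m_i$. Indeed, if $v\mid w$ and $v\neq w$, then $w/v$ is an integer greater than $1$, it divides $n$, and all of its prime factors lie among $p_1,\dots,p_\omega$. Suppose $\Omega(w/v)\ge 2$. Pick a prime $p_i$ dividing $w/v$; then $(w/v)/p_i > 1$, so $c := v\,p_i$ satisfies $v < c < w$, $v\mid c$, $c\mid w$, and $c\mid n$, hence $c\in V(n)$. This contradicts $(v,w)\in E^H(n)$. Therefore $w/v = p_i$ for exactly one index $i$, and $w = v\,p_i \in V(n)$ requires $v_i < m_i$. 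Symmetrically, an in-arc $(u,v)\in E^H(n)$ must satisfy $v/u = p_i$ for exactly one $i$, which requires $v_i \ge 1$.

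Then the counting is immediate. The out-degree $\Delta^+(v)$ is the number of indices $i\in\{1,\dots,\omega\}$ with $v_i < m_i$, so $\Delta^+(v)\le \omega = \omega(n)$; the in-degree $\Delta^-(v)$ is the number of indices $i$ with $v_i > 0$, so $\Delta^-(v)\le \omega(n)$; and adding these gives $\Delta(v) = \Delta^+(v)+\Delta^-(v) \le 2\omega(n)$, as claimed.

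I do not anticipate a genuine obstacle: the only step needing a little care is the claim that a Hasse arc corresponds to multiplication by a \emph{single} prime, i.e.\ ruling out a composite quotient $w/v$ via the explicit intermediate node $c = v\,p_i$. Once that is in hand, the bounds are just counts over the $\omega(n)$ available prime slots; in fact the same argument yields the exact identities $\Delta^+(v) = |\{i : v_i < m_i\}|$ and $\Delta^-(v) = |\{i : v_i > 0\}|$, which is stronger than the stated inequalities and also makes transparent when equality holds (namely at $v=1$ for the out-degree and at $v=n$ for the in-degree, and at any $v$ with $0 < v_i < m_i$ for all $i$ for the full bound $\Delta(v)=2\omega(n)$).
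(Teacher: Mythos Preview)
Your argument is correct and follows essentially the same idea as the paper's proof: out-arcs from $v$ correspond to multiplying by one of the (at most $\omega(n)$) distinct primes, and in-arcs correspond to dividing by one. You are simply more explicit than the paper in justifying that a Hasse arc forces the quotient $w/v$ to be a single prime, and in recording the exact formulas $\Delta^+(v)=|\{i:v_i<m_i\}|$ and $\Delta^-(v)=|\{i:v_i>0\}|$; note only that your equality case $\Delta(v)=2\omega(n)$ requires such a $v$ to exist, i.e.\ $m_i\ge 2$ for every $i$, consistent with Theorem~\ref{Tdelta}.
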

\begin{proof}
For the outdegree, each node can add at most one more of each distinct prime to the product. 
For the indegree, the product represented by the node was obtained by adding at most one prime to the product at the level just below. 
\end{proof}

\begin{definition} The degree of the graph $G^H(n)$ denoted , $\Delta(G^H(n))$ is defined by 
\begin{equation}
\Delta(G^H(n)) = \max_{v \in V(n)} \Delta(v)
\end{equation} 
\end{definition}
For example from Figure~\ref{f02}, $\Delta(G^H(540)) = 5$ because the maximum node degree of $G^H(540)$ occurs at $90, 30, 18,$ and $6$. 
The $\Delta(G^H(n))$ or simply $\Delta(n)$ sequence is not listed in~\cite{oeis}. 
The $\Delta(G^H(n))$ can be computed very efficiently as stated in Theorem~\ref{Tdelta} using only $M'(n)$. 
Let $G(n)$ be a sub-multiset of $M'(n)$.
\begin{eqnarray}
G(n) & =&  [m_i \in M'(n) \suchthat m_i > 1] \\
|G(n)| & = & \sum_{m_i \in M(n)}gto(m_i) \textrm{ where } gto(m_i) = 
\left\{
  \begin{array}{l l}
   1 &  \textrm{if }  m_i > 1 \\
   0 &  \textrm{otherwise }
  \end{array} \right.
\end{eqnarray} 
For example of $M'(540) = [2, 3, 1]$, $G(540) = [2, 3]$, and $|G(540)| = 2$. 
\begin{theorem}[Degree of $G^H(n)$]  
\label{Tdelta}
\begin{equation}
\Delta(G^H(n)) = \omega(n) + |G(n)|
\end{equation}
\end{theorem}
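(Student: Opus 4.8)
The plan is to reduce the degree computation to a coordinate-by-coordinate analysis using the sequence representation $M_n(v) = (v_1,\dots,v_\omega)$ of a node. First I would record the elementary but crucial fact that in $G^H(n)$ the neighbours of a node $v$ are obtained by changing exactly one coordinate of $M_n(v)$ by $\pm 1$: an outgoing arc $(v,b)$ corresponds to some $i$ with $v_i < m_i$ (set $b_i = v_i+1$, all other coordinates fixed), and an incoming arc $(a,v)$ corresponds to some $i$ with $v_i > 0$. This follows directly from the definitions (\ref{e03}) of $E^H(n)$, or equivalently from the identification of $G^H(n)$ with a Cartesian product of paths already used in the proof that Algorithm~\ref{Ahsize} is correct. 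Consequently
\[
\Delta^+(v) = \#\{\, i \suchthat v_i < m_i \,\},\qquad \Delta^-(v) = \#\{\, i \suchthat v_i > 0 \,\},
\]
and hence $\Delta(v) = \sum_{i=1}^{\omega} c_i(v)$, where $c_i(v) = [\,v_i < m_i\,] + [\,v_i > 0\,]$ is the contribution of the $i$-th coordinate.

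Next I would analyse the single-coordinate contribution $c_i(v)$. Since every exponent satisfies $m_i \ge 1$, at least one of the two conditions $v_i < m_i$, $v_i > 0$ holds, so $c_i(v) \ge 1$ always. Moreover $c_i(v) = 2$ exactly when $0 < v_i < m_i$, and such a value $v_i$ exists if and only if $m_i \ge 2$, i.e. if and only if $m_i \in G(n)$. Thus for each $i$ the maximum possible value of $c_i(v)$ over all admissible choices of $v_i$ is $2$ when $m_i \ge 2$ and $1$ when $m_i = 1$.

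Finally I would assemble these bounds. Because the coordinates of $M_n(v)$ range independently over $\{0,1,\dots,m_i\}$ as $v$ ranges over $V(n)$, one can choose a single node $v^\ast$ that simultaneously maximizes every $c_i$: take $v_i^\ast = 1$ whenever $m_i \ge 2$ and (say) $v_i^\ast = 0$ whenever $m_i = 1$. This $v^\ast$ lies in $V(n)$ since $0 \le v_i^\ast \le m_i$ for all $i$, and it realizes
\[
\Delta(v^\ast) = \sum_{i=1}^{\omega} \max_{v}\, c_i(v) = \sum_{m_i \ge 2} 2 + \sum_{m_i = 1} 1 = \omega(n) + |G(n)|,
\]
while no node can do better by the per-coordinate bound. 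Hence $\Delta(G^H(n)) = \max_{v\in V(n)}\Delta(v) = \omega(n) + |G(n)|$.

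I do not anticipate a genuine obstacle here; the only point requiring care is the justification of the neighbour description (the $\pm 1$-in-one-coordinate characterization of arcs in $G^H(n)$), which underlies Lemma~\ref{Tmbinout} and should be stated cleanly, and the observation that the coordinates are independent so that the coordinatewise maxima can be attained by one common node rather than merely bounding each degree separately.
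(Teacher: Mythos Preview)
Your argument is correct and follows essentially the same route as the paper: a coordinate-by-coordinate analysis showing each index contributes at most $1 + gto(m_i)$ to $\Delta(v)$, followed by exhibiting a single node attaining the bound. The only cosmetic difference is the witness---the paper takes $M_n(v) = (m_1-1,\dots,m_\omega-1)$ while you take $v_i^\ast = 1$ for $m_i\ge 2$ and $0$ otherwise---but both choices work for the same reason.
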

\begin{proof}
Consider $v  \in V(n)$ with $M_n(v) = (v_1,\cdots, v_{\omega})$ where $0 \le v_i \le m_i$. 
For a $v_i$ whose $m_i > 1$, $v$ has an incoming arc from a node $u$ whose $M_n(u) = (v_1,\cdots, (u_i = v_i - 1), \cdots, v_{\omega})$ provided $v_i > 0$ and $v$ has an outgoing arc to a node  $w$ whose $M_n(w)= (v_1,\cdots, (w_i = v_i + 1), \cdots, v_{\omega})$ as long as $v_i  < m_i$.
Every element in $G(n)$  contributes $2$ to $\Delta(v)$. 
For a $v_i$ in the $M'(n) - G(n)$ multiset, whose $m_i = 1$, $v$ can have either only the incoming arc from a node $u$ whose $M_n(u) = (v_1,\cdots, (u_i = 0), \cdots, v_{\omega(n)})$ if $v_i = 1$ or the outgoing arc to a node $w$ whose $M_n(w) = [v_1,\cdots, (w_i = 1), \cdots, v_{\omega(n)}]$ if $v_i = 0$. There are $\omega(n) - |G(n)|$ number of such elements, $\le 1$.
Therefore, for every node $v \in V(n)$, $\Delta(v) \le 2 \times |G(n)|+ \omega(n) - |G(n)| = \omega(n) + |G(n)|$. There exists a node $v$ whose $\Delta(v) = \omega(n) + |G(n)|$. One such node is $v$ such that $M_n(v) = (m_1-1, m_2-1,\cdots,m_{\omega(n)}-1)$. 
\end{proof}
For example in Figure~\ref{f02}, in $G^H((2,3,1))$, the node $18$ whose $M_n(18) = (1,2,0)$ has the maximum degree, $5$. 

The next graph invariant of interest is the cardinality of paths, $|P(G^H(n))|$. 
The first 200 integer sequence entries match with those labeled as A008480~\cite{oeis} which is the number of ordered prime factorizations of $n$  with its multinomial coefficient formula given in Theorem~\ref{Tmnumop}~\cite{AS1972,KKW1993}. 
\begin{theorem}[the number of ordered prime factorizations of $n$~\cite{AS1972,KKW1993}]  
\label{Tmnumop}
\begin{equation}
opf(n)= \frac{(\sum_{x \in M(n)} x)!}{ \prod_{x \in M(n)} x!}
\end{equation}
\end{theorem}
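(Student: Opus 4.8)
The plan is to count ordered prime factorizations of $n$ directly, as linear arrangements of a multiset. By the Fundamental Theorem of Arithmetic, every factorization of $n$ into primes uses exactly the primes $p_1,\dots,p_\omega$ with respective multiplicities $m_1,\dots,m_\omega$, hence uses a total of $\Omega(n)=\sum_{x\in M(n)}x$ prime factors counted with multiplicity. Consequently an ordered prime factorization of $n$ is precisely a word of length $\Omega(n)$ over the alphabet $\{p_1,\dots,p_\omega\}$ in which $p_i$ occurs $m_i$ times, i.e. a permutation of the multiset $M'(n)$ read as a multiset of primes. So the first thing I would do is record this reduction explicitly.

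First I would then count the arrangements of $\Omega(n)$ distinguishable tokens, of which there are $\Omega(n)!$, and quotient by the symmetries identifying tokens that carry the same prime: the $m_i$ tokens labelled $p_i$ may be permuted among themselves without changing the resulting factorization, so each factorization is counted $\prod_i m_i!$ times. This yields $opf(n)=\Omega(n)!/\prod_i m_i! = \left(\sum_{x\in M(n)}x\right)!\big/\prod_{x\in M(n)}x!$.

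To make the quotient fully rigorous without invoking a group action, I would instead build the factorization by successively choosing slots: among the $\Omega(n)$ positions, choose $m_1$ for the copies of $p_1$ in $\binom{\Omega(n)}{m_1}$ ways, then $m_2$ of the remaining $\Omega(n)-m_1$ positions for $p_2$, and so on down to $p_\omega$. The product of these binomial coefficients telescopes to $\Omega(n)!/(m_1!\,m_2!\cdots m_\omega!)$, and the sequence of choices is in obvious bijection with the ordered prime factorizations; a short induction on $\omega(n)$ formalizes the telescoping if one wants it spelled out. (If the connection to the Hasse-diagram path count is wanted, the additional step is to note that a path from $1$ to $n$ in $G^H(n)$ is a maximal chain whose every arc multiplies the current divisor by exactly one prime, by Lemma~\ref{Lmlevel}; reading those primes off in order gives an ordered prime factorization, and this correspondence is a bijection, so $|P^H(n)|=opf(n)$.)

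There is no genuine obstacle here; the one point requiring care is the appeal to unique factorization, which is what guarantees that the multiset of prime factors — and hence the parameters $\Omega(n)$ and $m_1,\dots,m_\omega$ — is well defined, so that ``ordered prime factorization of $n$'' really is the same notion as ``permutation of the multiset $M'(n)$.'' Everything after that reduction is the standard multinomial count.
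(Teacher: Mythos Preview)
Your proof is correct: the reduction to permutations of the multiset of prime factors via unique factorization, followed by the standard multinomial count (either by the overcounting-and-dividing argument or by the telescoping product of binomial coefficients), is entirely valid.

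However, there is nothing to compare against: the paper does \emph{not} supply its own proof of Theorem~\ref{Tmnumop}. The theorem is stated with a citation to \cite{AS1972,KKW1993} and is immediately followed by the remark ``While a nice formula has been given in~\cite{AS1972,KKW1993}, a recursive definition is given here\ldots'', after which the paper moves on to Theorem~\ref{Tmpnumh}. In other words, the paper treats this multinomial formula as a known result imported from the literature, and offers instead the recursive path-count formula of Theorem~\ref{Tmpnumh} as its own contribution.

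Your parenthetical remark connecting $opf(n)$ to $|P^H(n)|$ via the bijection ``path in $G^H(n)$ $\leftrightarrow$ word of primes read off the arcs'' is in fact slightly more than the paper makes explicit: the paper only observes empirically that the first 200 terms of the $|P^H(n)|$ sequence match A008480, and then proves the recursive formula separately. Your bijection is the clean way to identify the two quantities directly.
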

While a nice formula has been given in~\cite{AS1972,KKW1993}, a recursive definition is given here where the {\em dynamic programming} technique can be applied to quickly generate the integer sequence.  
\begin{theorem}[Cardinality of $P(G^H(n))$]  
\label{Tmpnumh}
\begin{equation}
|P(G^H(n))|=
\left\{
  \begin{array}{l l}
   \sum\limits_{v \in V_{\Omega(n)-1}(n)}|P(G^H(v))|  &  \textrm{if } \Omega(n) > 1  \\
    1 &  \textrm{if }  \Omega(n) \le  1
  \end{array} \right.
\end{equation}
\end{theorem}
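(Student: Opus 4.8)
The plan is to set up a bijection between $P(G^H(n))$ and the (disjoint) union $\bigcup_{v \in V_{\Omega(n)-1}(n)} P(G^H(v))$, and then read off the recursion by summing cardinalities. The base case $\Omega(n)\le 1$ is immediate: if $\Omega(n)=0$ then $n=1$ and the only element of $P(G^H(1))$ is the trivial (empty) path from $1$ to $1$; if $\Omega(n)=1$ then $n$ is a prime $p$ and $P(G^H(p))=\{\langle(1,p)\rangle\}$. So I would assume $\Omega(n)\ge 2$ for the rest.

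First I would record the elementary fact that a covering pair in the divides order is exactly a "prime step": $(a,b)\in E^H(n)$ if and only if $a\mid b$, $a<b$, and $b/a$ is prime. Indeed, if $b/a = st$ with integers $s,t>1$, then $c = as$ witnesses $a<c<b$ with $a\mid c\mid b$, so $(a,b)\notin E^H(n)$ by~(\ref{e03}); conversely, if $b/a$ is prime no such intermediate divisor can exist. Combining this with Lemma~\ref{Lmlevel}, the tail $v$ of the last arc of any path in $P(G^H(n))$ satisfies $\Omega(v)=\Omega(n)-1$, hence $v\in V_{\Omega(n)-1}(n)$; and conversely, for every $v\in V_{\Omega(n)-1}(n)$ we have $n/v$ prime, so the arc $(v,n)$ does lie in $E^H(n)$.

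Next I would establish the key structural claim: for any $v\mid n$, the subgraph of $G^H(n)$ induced on the divisor set $V(v)\subseteq V(n)$ is precisely $G^H(v)$. The only point to check is that the covering relation in~(\ref{e03}) is "local": if $a\mid v$, $b\mid v$, and $a\mid c\mid b$ with $a<c<b$, then $c\mid v$ as well (hence $c\in V(n)$), so the set of excluded arcs is the same whether one intersects $V(v)$ with $V(v)$ or with $V(n)$. Moreover, any path from $1$ to $v$ in $G^H(n)$ visits only divisors of $v$, since each node on such a path divides the terminal node $v$. Therefore the paths from $1$ to $v$ inside $G^H(n)$ are exactly the elements of $P(G^H(v))$.

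With these facts in hand the bijection is explicit. A path $P\in P(G^H(n))$ has length $\Omega(n)\ge 2$ by Corollary~\ref{Cldepth}, so it has a well-defined final arc $(v,n)$; send $P$ to the pair $\bigl(v,\,P'\bigr)$ where $v\in V_{\Omega(n)-1}(n)$ and $P'$ is $P$ with its last arc deleted, regarded as an element of $P(G^H(v))$. The inverse map appends the arc $(v,n)$ to a given path in $P(G^H(v))$, which is legitimate because $(v,n)\in E^H(n)$ as shown above. These maps are mutually inverse, and paths ending at distinct $v$ give disjoint fibres, so $|P(G^H(n))| = \sum_{v\in V_{\Omega(n)-1}(n)} |P(G^H(v))|$, as claimed. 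The hard part is really just the locality claim for the covering relation; but this amounts to the observation that the witnesses to non-covering in~(\ref{e03}) are automatically divisors of the two endpoints, so it presents no genuine obstacle, and the rest is bookkeeping with path concatenation.
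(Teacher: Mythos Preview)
Your argument is correct and follows the same approach as the paper's proof, which consists of the single sentence ``All paths in $P(G^H(n))$ must contain exactly one node at level $\Omega(n)-1$.'' You have simply made explicit the steps the paper leaves to the reader: the characterization of covering pairs as prime steps, the locality of the Hasse relation showing $P(G^H(v))$ coincides with the set of $1$-to-$v$ paths inside $G^H(n)$, and the resulting bijection by deletion/appending of the final arc.
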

\begin{proof}
All paths in $P(G^H(n))$ must contain exactly one node at level $\Omega(n) - 1$. 
\end{proof}

The next four graph invariants involve the fact that $G^H(n)$ is {\em bipartite} as depicted in Figure~\ref{f04}. 
\begin{theorem}[$G^H(n)$ is bipartite]  
\label{Tmbipart}
\end{theorem}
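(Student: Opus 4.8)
The plan is to exhibit an explicit bipartition of $V(n)$ and to check that every arc of $G^H(n)$ joins the two parts. The natural candidate is the partition by parity of level coming from the level decomposition of Lemma~\ref{Lmlevel}: put $V_E(n) = \bigcup_{l\ \mathrm{even}} V_l(n)$ and $V_O(n) = \bigcup_{l\ \mathrm{odd}} V_l(n)$. By the Observation following Lemma~\ref{Lmlevel}, $\{V_E(n), V_O(n)\}$ is a partition of $V(n)$, so the only thing left to verify is that no arc of $E^H(n)$ has both endpoints in $V_E(n)$ or both in $V_O(n)$.

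The key step is the claim that if $(a,b)\in E^H(n)$ then $b/a$ is prime, and hence, by Lemma~\ref{Lmlevel}, $a$ and $b$ lie on consecutive levels $V_l(n)$ and $V_{l+1}(n)$. To prove the claim, suppose $b/a$ were not prime. Since $a\mid b$ and $a<b$ we have $b/a>1$, so $b/a$ has a prime factor $p$; and since $b/a$ is composite, $b/a\ne p$, so $c:=ap$ satisfies $a<c<b$ with $a\mid c$ and $c\mid b$. This contradicts the defining condition~(\ref{e03}) for $(a,b)\in E^H(n)$. Hence $b/a$ is prime, and the levels of the two endpoints of any arc differ by exactly $1$; equivalently $\Omega(b)=\Omega(a)+1$.

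Combining the two observations, every arc of $G^H(n)$ joins a node of even level to a node of odd level, so the underlying undirected graph of $G^H(n)$ is bipartite with parts $V_E(n)$ and $V_O(n)$ (equivalently, the colouring $v\mapsto \Omega(v)\bmod 2$ is proper). I do not expect a real obstacle here: the only substantive point is the consecutive-levels claim, which is a one-line consequence of the Hasse-diagram condition~(\ref{e03}); everything else is bookkeeping with the level decomposition already established in Lemma~\ref{Lmlevel} and its Observation. This is also precisely the bipartition that the subsequent invariants $|V_E(n)|$, $|V_O(n)|$, $|E_E(n)|$, $|E_O(n)|$ will refer to.
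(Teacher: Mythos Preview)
Your proof is correct and follows exactly the same approach as the paper: bipartition $V(n)$ by parity of level and observe that every arc of $E^H(n)$ joins consecutive levels. The paper's proof is a terse two sentences that simply assert this last fact, whereas you spell out the reason (if $(a,b)\in E^H(n)$ then $b/a$ is prime), so your version is a more detailed rendering of the same argument.
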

\begin{proof}
Arcs join only even level nodes to odd level nodes and vice versa. 
Thus, the nodes at even and odd levels form a bipartition of $V(n)$. 
\end{proof}
\begin{figure}[htb]
\begin{center}
\begin{tabular}{cc}
\resizebox{!}{1.8in}{\includegraphics{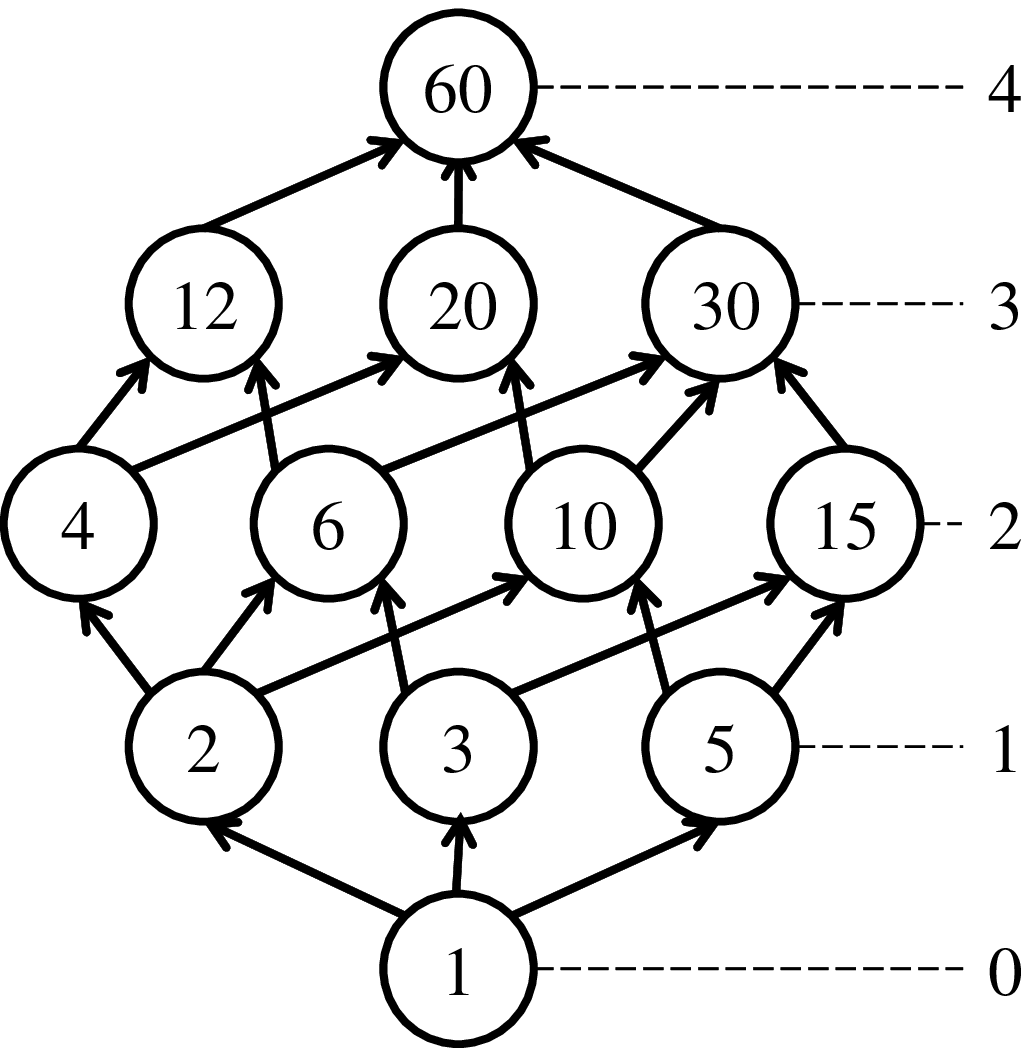}}
&
\resizebox{!}{1.8in}{\includegraphics{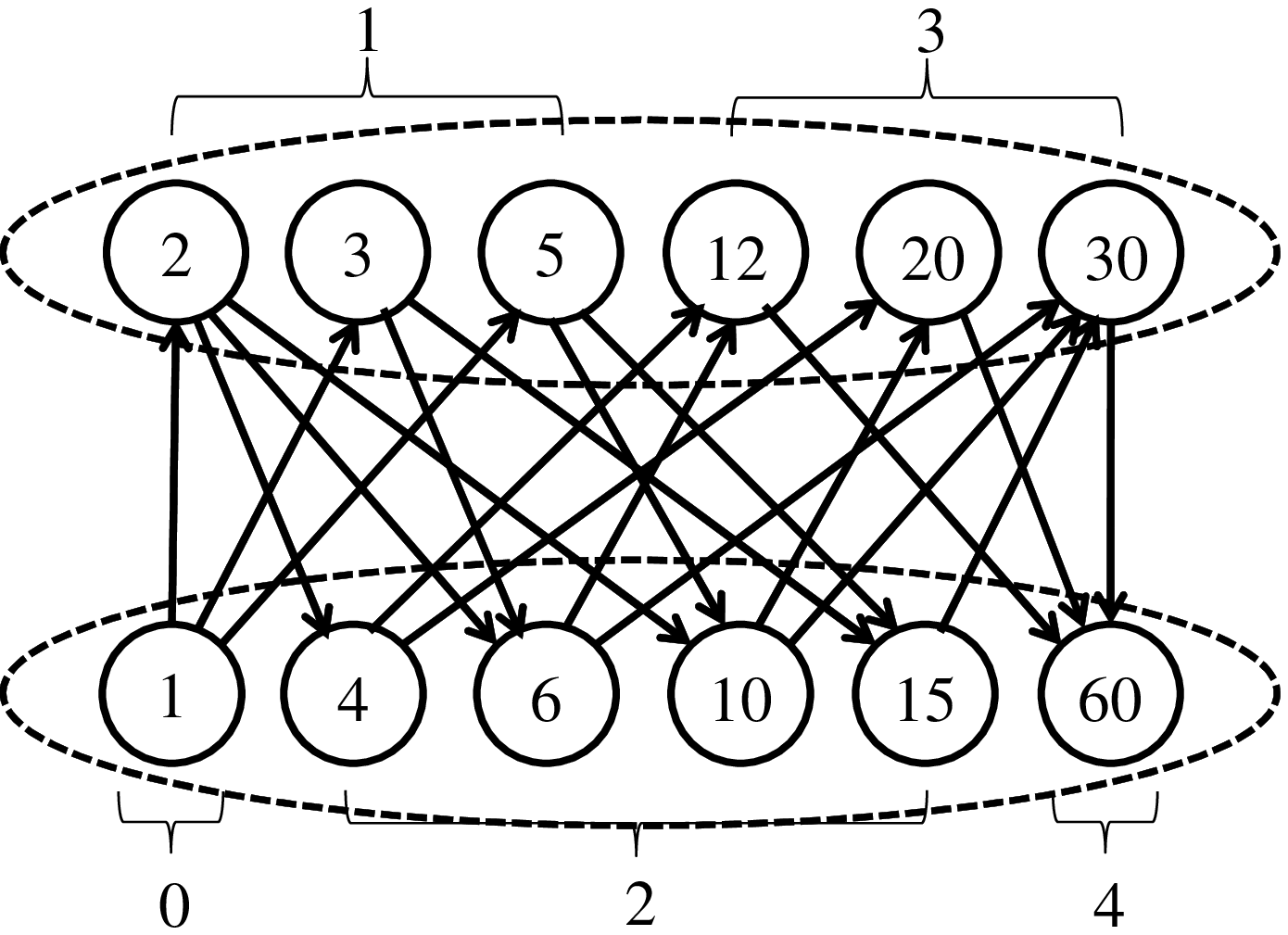}}\\
(a) Hasse diagram $G^H(60)$ & (b) $G^H(60)$ shown as a bipartite graph 
\end{tabular}
\end{center}
\caption{\label{f04} $G^H(60)$}
%= G^H(2^2 \times 3 \times  5) =  G^H(p^2 q r)$.}
\end{figure}

\begin{definition}
\label{Defoddv}
\begin{eqnarray}
V_E(n) & = & \{v \in V(n) \mid \sum_{m_i \in M_n(v)}m_i = even\}  \\
V_O(n) & = & \{v \in V(n) \mid \sum_{m_i \in M_n(v)}m_i = odd\}  
\end{eqnarray} 
\end{definition}
The integer sequence of the cardinality of $V_E$  matches with  A038548 which is the   number of divisors of $n$ that are at most $\sqrt{n}$~\cite{oeis,Andrews2004}. 
The integer sequence of $|V_O|$ also appears as A056924, described  as the number of divisors of $n$ that are smaller than $\sqrt{n}$~\cite{oeis,Andrews2004}. 
\begin{theorem}[Cardinality of $V_O(n)$]  
\label{TmnumOv}
\begin{equation}
|V_O(n)| = \left \lfloor \frac{|V(n)|}{2}\right \rfloor
\end{equation}
\end{theorem}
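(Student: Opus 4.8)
The plan is to compute the \emph{signed} count $|V_E(n)| - |V_O(n)| = \sum_{d \mid n} (-1)^{\Omega(d)}$ and combine it with the \emph{unsigned} count $|V_E(n)| + |V_O(n)| = |V(n)|$. The latter holds because $V_E(n)$ and $V_O(n)$ are disjoint and their union is all of $V(n)$ — every divisor has $\Omega$-value of exactly one parity — which is essentially the bipartition of Theorem~\ref{Tmbipart}.

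First I would note that, writing $n = p_1^{m_1}\cdots p_\omega^{m_\omega}$, a divisor $d$ corresponds uniquely to a choice of exponent $v_i \in \{0,1,\dots,m_i\}$ for each $i$, with $\Omega(d) = \sum_i v_i$. Hence, by the same product rule of counting used to prove Theorem~\ref{Tmorder},
\[
\sum_{d \mid n}(-1)^{\Omega(d)} = \prod_{i=1}^{\omega}\left(\sum_{j=0}^{m_i}(-1)^j\right).
\]
Each inner factor equals $1$ when $m_i$ is even and $0$ when $m_i$ is odd, so the whole product is $1$ if every $m_i$ is even — i.e. if $n$ is a perfect square — and $0$ otherwise.

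Next I would split into two cases. If $n$ is a perfect square, then $|V(n)| = \prod_i(m_i+1)$ is a product of odd numbers, hence odd, while $|V_E(n)| - |V_O(n)| = 1$; solving the two equations gives $|V_O(n)| = (|V(n)|-1)/2 = \lfloor |V(n)|/2\rfloor$. If $n$ is not a perfect square, then some $m_i$ is odd, so $m_i+1$ is even and $|V(n)|$ is even, while $|V_E(n)| - |V_O(n)| = 0$, giving $|V_O(n)| = |V(n)|/2 = \lfloor |V(n)|/2\rfloor$. The degenerate case $n=1$, where $V(1)=\{1\}$ and $|V_O(1)|=0$, falls under the perfect-square case.

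There is no serious obstacle; the only point needing a little care is recognizing that ``all exponents even'' is simultaneously the condition making $|V_E(n)|-|V_O(n)|$ nonzero and the condition making $|V(n)|$ odd, so the floor appears exactly where it should. A purely combinatorial alternative would use the involution $d \mapsto n/d$: when $\Omega(n)$ is odd it has no fixed point and interchanges $V_E(n)$ and $V_O(n)$, settling the claim at once; when $\Omega(n)$ is even one instead pairs the exponents of the least prime as $0\leftrightarrow1,\ 2\leftrightarrow3,\dots$ and iterates over the primes, the only divisor left unpaired being $n$ itself precisely when $n$ is a perfect square.
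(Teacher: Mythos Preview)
Your argument is correct and considerably cleaner than the paper's. The paper proceeds by induction on $\omega(n)$: writing $M_{\omega+1}$ for $M_\omega$ with one more exponent $m_{\omega+1}$ appended, it uses the Cartesian-product decomposition
\[
|V_O(M_{\omega+1})| = |V_O(M_\omega)|\cdot|V_E(m_{\omega+1})| + |V_E(M_\omega)|\cdot|V_O(m_{\omega+1})|
\]
and then verifies the floor formula in four separate cases according to the parities of $|V(M_\omega)|$ and $m_{\omega+1}$. Your approach instead evaluates the signed sum $\sum_{d\mid n}(-1)^{\Omega(d)}$ multiplicatively in one stroke, identifying it as the indicator of $n$ being a perfect square (essentially the classical identity $\sum_{d\mid n}\lambda(d)=[\,n\text{ is a square}\,]$ for the Liouville function). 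This replaces a four-case induction with a single product computation and a two-case wrap-up, and it has the further virtue of explaining \emph{why} the floor is needed: the parity of $|V(n)|$ and the nonvanishing of the signed sum are governed by the same condition. The paper's inductive approach, by contrast, is more self-contained and parallels its later proof for $|E_O(n)|$, but at the cost of repetitive algebra.
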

\begin{proof}
The proof is by induction. For the base case $\omega = 1$, each divisor has a single exponent, i.e., $v_i \in \{p_1^0,p_1^1,\cdots,p_1^{m_1}\}$. Clearly, $|V_O| = \left \lfloor \frac{|V(n)|}{2}\right \rfloor$. 
For the inductive step $\omega + 1$, let $M_{\omega + 1}$ be $M_{\omega}$ with $m_{\omega + 1}$ appended.
$V_O(M_{\omega+1})$ is the union of the cartesian product of $V_O(M_{\omega})$ and $V_E(m_{\omega + 1})$ 
together with  the cartesian product of $V_E(M_{\omega})$ and $V_O(m_{\omega + 1})$, thus 
\begin{equation}
|V_O(M_{\omega + 1})| = |V_O(M_{\omega})|\times|V_E(m_{\omega + 1})| + |V_E(M_{\omega})|\times|V_O(m_{\omega + 1})|
\end{equation}
There are four cases depending on the parities of $|V(M_{\omega})|$ and $m_{\omega + 1}$. 
The following uses Theorem~\ref{Tmorder} and Definition~\ref{Defoddv}. 
\newline
If $|V(M_{\omega})|$ is odd and $m_{\omega + 1}$ is odd, 
\begin{eqnarray}
|V_O(M_{\omega + 1})| & = &  \frac{|V(M_{\omega})|-1}{2} \times \frac{m_{\omega + 1}+1}{2} +  \frac{|V(M_{\omega})|+1}{2} \times \frac{m_{\omega + 1}+1}{2}  \nonumber \\
 & = &  \frac{|V(M_{\omega})|(m_{\omega + 1}+1)-(m_{\omega + 1}+1)}{4}    + \frac{|V(M_{\omega})|(m_{\omega + 1}+1)+(m_{\omega + 1}+1)}{4}  \nonumber \\
 & = &  \frac{|V(M_{\omega + 1})|-(m_{\omega + 1}+1) + |V(M_{\omega + 1})|+(m_{\omega + 1}+1)}{4} 
%=   \frac{|V(M_{\omega + 1})|}{2} 
= \left \lfloor \frac{|V(M_{\omega + 1})|}{2}\right \rfloor \nonumber 
\end{eqnarray}
If $|V(M_{\omega})|$ is odd and $m_{\omega + 1}$ is even, 
\begin{eqnarray}
|V_O(M_{\omega + 1})| & = &  \frac{|V(M_{\omega})|-1}{2} \times \frac{m_{\omega + 1}+2}{2} +  \frac{|V(M_{\omega})|+1}{2} \times \frac{m_{\omega + 1}}{2}  \nonumber \\
 & = &  \frac{|V(M_{\omega})|(m_{\omega + 1})-(m_{\omega + 1} + 2)}{4}    + \frac{|V(M_{\omega})|(m_{\omega + 1}+2)+m_{\omega + 1}}{4}  \nonumber \\
 & = &  \frac{|V(M_{\omega})|(2m_{\omega + 1}+2)-(m_{\omega + 1} + 2)+m_{\omega + 1}}{4} 
=   \frac{|V(M_{\omega + 1})|-1}{2} = \left \lfloor \frac{|V(M_{\omega + 1})|}{2}\right \rfloor  \nonumber 
\end{eqnarray}
If $|V(M_{\omega})|$ is even and $m_{\omega + 1}$ is odd, 
\begin{eqnarray}
|V_O(M_{\omega + 1})| & = &  \frac{|V(M_{\omega})|}{2} \times \frac{m_{\omega + 1}+1}{2} +  \frac{|V(M_{\omega})|}{2} \times \frac{m_{\omega + 1}+1}{2}  \nonumber \\
 & = &  \frac{|V(M_{\omega})|(m_{\omega + 1}+1)}{4}    + \frac{|V(M_{\omega})|(m_{\omega + 1}+1)}{4} =   \frac{|V(M_{\omega + 1})|}{2}  = \left \lfloor \frac{|V(M_{\omega + 1})|}{2}\right \rfloor  \nonumber 
\end{eqnarray}
If $|V(M_{\omega})|$ is even and $m_{\omega + 1}$ is even, 
\begin{eqnarray}
|V_O(M_{\omega + 1})| & = &  \frac{|V(M_{\omega})|}{2} \times \frac{m_{\omega + 1}+2}{2} +  \frac{|V(M_{\omega})|}{2} \times \frac{m_{\omega + 1}}{2}  \nonumber \\
 & = &  \frac{|V(M_{\omega})|(2m_{\omega + 1}+2)}{4}  =   \frac{|V(M_{\omega + 1})|}{2}  = \left \lfloor \frac{|V(M_{\omega + 1})|}{2}\right \rfloor  \nonumber 
\end{eqnarray}
Therefore, $|V_O(M_{\omega + 1})| = \left \lfloor \frac{|V(M_{\omega + 1})|}{2}\right \rfloor$ in all four cases. 
\end{proof}
\begin{corollary}[Cardinality of $V_E(n)$]  
\label{TmnumEv}
\begin{equation}
|V_E(n)| =|V(n)| - |V_O(n)|  =|V(n)| - \left \lfloor |V(n)|/2\right \rfloor
\end{equation}
\end{corollary}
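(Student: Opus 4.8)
The plan is to derive the corollary directly from Theorem~\ref{TmnumOv} together with the fact that $V_E(n)$ and $V_O(n)$ form a partition of $V(n)$. First I would observe that, by Definition~\ref{Defoddv}, every node $v \in V(n)$ has a signature sum $\sum_{m_i \in M_n(v)} m_i$ that is either even or odd but not both; hence $V_E(n)$ and $V_O(n)$ are disjoint and their union is $V(n)$. This is precisely the bipartition exploited in the proof of Theorem~\ref{Tmbipart}. Consequently $|V(n)| = |V_E(n)| + |V_O(n)|$, which yields the first claimed equality $|V_E(n)| = |V(n)| - |V_O(n)|$.

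For the second equality I would simply substitute the closed form supplied by Theorem~\ref{TmnumOv}, namely $|V_O(n)| = \lfloor |V(n)|/2 \rfloor$, to obtain $|V_E(n)| = |V(n)| - \lfloor |V(n)|/2 \rfloor$. If one wishes, the equivalent form $|V_E(n)| = \lceil |V(n)|/2 \rceil$ may also be recorded, since $k - \lfloor k/2 \rfloor = \lceil k/2 \rceil$ for every nonnegative integer $k$; this matches the description of A038548 as the number of divisors of $n$ not exceeding $\sqrt{n}$.

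There is essentially no obstacle here: the corollary is a bookkeeping consequence of the already-established count of $V_O(n)$ and the disjointness of the two parity classes. The only point meriting an explicit sentence is the partition claim, and that is immediate from Definition~\ref{Defoddv}; in particular no induction or case analysis is needed, as all of the substantive work was carried out in the proof of Theorem~\ref{TmnumOv}.
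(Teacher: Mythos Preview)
Your proof is correct and follows essentially the same approach as the paper: both argue that $V_E(n)$ and $V_O(n)$ partition $V(n)$, whence $|V_E(n)| = |V(n)| - |V_O(n)|$, and then invoke Theorem~\ref{TmnumOv} to replace $|V_O(n)|$ by $\lfloor |V(n)|/2\rfloor$. Your additional remarks about the ceiling form and the connection to A038548 are fine elaborations but not part of the paper's proof.
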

\begin{proof}
Since $V_E(n)$ and  $V_O(n)$ partition  $V(n)$,  $|V_E(n)| =|V(n)| - |V_O(n)|$.
\end{proof}

Similarly as with $V(n)$, $E(n)$ is bipartite as follows. 
\begin{definition}
\begin{eqnarray}
E_E(n) & = & \{ (a,b) \in E^H(n) \mid \sum_{m_i \in M(a)}m_i = even\}  \\
E_O(n) & = & \{ (a,b) \in E^H(n) \mid \sum_{m_i \in M(a)}m_i = odd\}  
\end{eqnarray} 
\end{definition}
Surprisingly, the integer sequences of $|E_E(n)|$ and $|E_O(n)|$ are not listed in~\cite{oeis}. 
\begin{theorem}[Cardinality of $E_O(n)$]  
\label{TmnumSe}
\begin{equation}
|E_O(n)| = \left \lfloor \frac{|E^H(n)|}{2}\right \rfloor
\end{equation}
\end{theorem}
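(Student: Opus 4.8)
The plan is to prove the identity by induction on $\omega(n)$, in the spirit of the proof of Theorem~\ref{TmnumOv} and using, as for Algorithm~\ref{Ahsize}, that $G^H(n)$ is a Cartesian product of directed paths. It is worth noting the shortcut available when $\Omega(n)$ is even: Theorem~\ref{Tmsymme} pairs layer $l$ with layer $\Omega(n)-1-l$, and since $\Omega(n)-1$ is then odd the two layers of each pair have opposite tail-level parity, so summing $|E^H_l(n)|$ over the pairs gives $|E_E(n)|=|E_O(n)|$ with $|E^H(n)|$ even, whence the claim. The case $\Omega(n)$ odd seems to genuinely need the induction, so I run the induction uniformly.

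For the base case $\omega(n)=1$, say $n=p^{m}$, the graph $G^H(n)$ is the directed path $1\to p\to\cdots\to p^{m}$; its $m$ arcs have tails at levels $0,1,\dots,m-1$, of which exactly $\lfloor m/2\rfloor$ are odd, so $|E_O(n)|=\lfloor m/2\rfloor=\lfloor |E^H(n)|/2\rfloor$. For the inductive step, write $M_{\omega+1}$ for $M_\omega$ with the exponent $m:=m_{\omega+1}$ appended, so that $G^H(M_{\omega+1})$ is the Cartesian product of $G^H(M_\omega)$ with the path $P:=G^H(p_{\omega+1}^{m})$. A vertex of the product is a pair, and (as in Lemma~\ref{Lmlevel}) its level is the sum of the levels of its two coordinates, so the tail-level parity of a product arc is the sum mod $2$ of the tail-level parities of its coordinates. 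Each product arc is either ``horizontal'' --- a copy of an arc of $G^H(M_\omega)$ with the $P$-coordinate fixed at one of its $m+1$ vertices --- or ``vertical'' --- a copy of an arc of $P$ with the $G^H(M_\omega)$-coordinate fixed at one of its $|V(M_\omega)|$ vertices. Sorting horizontal arcs by the parity of the $G^H(M_\omega)$-arc tail and of the chosen $P$-vertex, and vertical arcs by the parity of the chosen $G^H(M_\omega)$-vertex and of the $P$-arc tail, yields
\[
|E_O(M_{\omega+1})| = |E_O(M_\omega)|\,|V_E(P)| + |E_E(M_\omega)|\,|V_O(P)| + |V_E(M_\omega)|\,|E_O(P)| + |V_O(M_\omega)|\,|E_E(P)|,
\]
where $E_O(P),E_E(P),V_O(P),V_E(P)$ are the even/odd arc and node sets of the path $P$; each of the four product terms counts arcs whose two coordinate tail-levels have opposite parities, hence whose tail-level is odd, and together they exhaust $E_O$ of the product.

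It remains to substitute and simplify. By the inductive hypothesis $|E_O(M_\omega)|=\lfloor |E^H(M_\omega)|/2\rfloor$, hence $|E_E(M_\omega)|=\lceil |E^H(M_\omega)|/2\rceil$ since $E_E$ and $E_O$ partition $E^H$; by Theorem~\ref{TmnumOv} and Corollary~\ref{TmnumEv}, $|V_O(M_\omega)|=\lfloor |V(M_\omega)|/2\rfloor$ and $|V_E(M_\omega)|=\lceil |V(M_\omega)|/2\rceil$; and for the path $P$ one has $|V(P)|=m+1$ and $|E^H(P)|=m$, so $|V_O(P)|=\lfloor (m+1)/2\rfloor$, $|V_E(P)|=\lceil (m+1)/2\rceil$, $|E_O(P)|=\lfloor m/2\rfloor$, $|E_E(P)|=\lceil m/2\rceil$. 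Applying the elementary identity $\lfloor a/2\rfloor\lceil b/2\rceil+\lceil a/2\rceil\lfloor b/2\rfloor=\tfrac12\big(ab-(a\bmod 2)(b\bmod 2)\big)$ first with $(a,b)=(|E^H(M_\omega)|,\,m+1)$ and then with $(a,b)=(|V(M_\omega)|,\,m)$, the right-hand side above becomes $\tfrac12\big(S-t\big)$, where $S=|E^H(M_\omega)|(m+1)+m|V(M_\omega)|$ and $t=(|E^H(M_\omega)|\bmod 2)((m+1)\bmod 2)+(|V(M_\omega)|\bmod 2)(m\bmod 2)$. Since exactly one of $m,m+1$ is even, $t$ equals $|E^H(M_\omega)|\bmod 2$ or $|V(M_\omega)|\bmod 2$; in particular $t\in\{0,1\}$ and $t\equiv S\pmod 2$, so $\tfrac12(S-t)=\lfloor S/2\rfloor$. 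Finally $S=|E^H(M_{\omega+1})|$ by Algorithm~\ref{Ahsize} (its recursion with $m_i=m_{\omega+1}$), so $|E_O(M_{\omega+1})|=\lfloor |E^H(M_{\omega+1})|/2\rfloor$, closing the induction.

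The main obstacle is the bookkeeping of the inductive step: pinning down the level-parity classification of the product's arcs so that precisely the four products above arise, and then carrying the floors and ceilings correctly through the substitution. Both are routine once the Cartesian-product picture is fixed, and the closing step is a pure parity computation. One point to keep in mind is that the value $|E_O(P)|=\lfloor m/2\rfloor$ used in the inductive step is exactly the $\omega=1$ base case, so the induction is genuinely on $\omega(n)$ and terminates correctly.
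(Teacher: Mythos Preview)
Your proof is correct and follows essentially the same inductive route as the paper: both induct on $\omega$, derive the identical four-term decomposition of $|E_O(M_{\omega+1})|$ from the Cartesian-product structure of $G^H$, and reduce to the recursion for $|E^H(M_{\omega+1})|$ from Algorithm~\ref{Ahsize}. The paper merely sketches the verification as an eight-case analysis on the parities of $|E^H(M_\omega)|$, $|V(M_\omega)|$, and $m_{\omega+1}$, while your floor/ceil identity together with the observation $t\equiv S\pmod 2$ dispatches all cases at once; this is a cleaner execution of the same argument rather than a different approach.
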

\begin{proof}
An inductive proof, similar to the proof for the node parity decomposition in Theorem~\ref{TmnumOv}, can be applied using the cartesian product of two graphs~(\ref{eec}) and the arc parity decomposition~(\ref{eeo}). 
\begin{equation}
|E^H(M_{\omega + 1})|  =   |E^H(M_{\omega})| \times |V(m_{\omega + 1})| +  |V(M_{\omega})| \times |E^H(m_{\omega + 1})|
\label{eec} 
\end{equation}
\begin{eqnarray}
|E_O(M_{\omega + 1})| & = &  |E_O(M_{\omega})| \times |V_E(m_{\omega + 1})|  +  |E_E(M_{\omega})| \times |V_O(m_{\omega + 1})| 
\label{eeo} \\
&& +  |V_O(M_{\omega})| \times |E_E(m_{\omega + 1})| +  |V_E(M_{\omega})| \times |E_O(m_{\omega + 1})| \nonumber
\end{eqnarray}
$|E_O(M_{\omega + 1})| = \left \lfloor |E^H(M_{\omega + 1})|/2 \right \rfloor$ 
in all eight cases bsed on parities of $|E^H(M_{\omega})|$,  $|V(M_{\omega})|$, and $m_{\omega + 1}$. 
\end{proof}
\begin{corollary}[Cardinality of $E_E(n)$]  
\label{TmnumEe}
\begin{equation}
|E_E(n)| =|E^H(n)| - |E_O(n)| =|E^H(n)| - \left \lfloor \frac{|E^H(n)|}{2}\right \rfloor
\end{equation}
\end{corollary}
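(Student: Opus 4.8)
The plan is to derive the corollary from the purely set-theoretic fact that $E_E(n)$ and $E_O(n)$ partition $E^H(n)$, combined with the closed form for $|E_O(n)|$ already established in Theorem~\ref{TmnumSe}. First I would observe that for every arc $(a,b)\in E^H(n)$ the level of its tail node, $\sum_{m_i\in M(a)}m_i$, is a well-defined nonnegative integer that is either even or odd but not both; hence the defining conditions of $E_E(n)$ and $E_O(n)$ are mutually exclusive and jointly exhaustive, so that $E^H(n)=E_E(n)\cup E_O(n)$ with $E_E(n)\cap E_O(n)=\emptyset$. Passing to cardinalities gives $|E^H(n)|=|E_E(n)|+|E_O(n)|$, equivalently $|E_E(n)|=|E^H(n)|-|E_O(n)|$, which is exactly the first asserted equality.

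For the second equality I would simply substitute the value $|E_O(n)|=\lfloor |E^H(n)|/2\rfloor$ supplied by Theorem~\ref{TmnumSe}, obtaining $|E_E(n)|=|E^H(n)|-\lfloor |E^H(n)|/2\rfloor$. One may additionally remark that this quantity equals $\lceil |E^H(n)|/2\rceil$, which is consistent with the arc-level symmetry recorded in Theorem~\ref{Tmsymme}. No induction or case analysis is needed at this point, since all of that work has already been done inside the proof of Theorem~\ref{TmnumSe}.

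There is essentially no genuine obstacle here; the only thing to be careful about is to keep the bookkeeping straight, namely that the partition being used is of the Hasse-diagram arc set $E^H(n)$ and not of $E^T(n)$, so that the appeal to Theorem~\ref{TmnumSe} is legitimate. Everything else follows immediately from the definitions of $E_E(n)$ and $E_O(n)$.
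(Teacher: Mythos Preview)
Your argument is correct and is essentially the same as the paper's: both observe that $E_E(n)$ and $E_O(n)$ partition $E^H(n)$, deduce $|E_E(n)|=|E^H(n)|-|E_O(n)|$, and then invoke Theorem~\ref{TmnumSe} for the floor expression. Your write-up simply spells out the partition claim and the substitution in more detail than the paper's one-line proof.
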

\begin{proof}
Since $E_E(n)$ and  $E_O(n)$ partition $E^H(n)$,  $|E_E(n)| =|E^H(n)| - |E_O(n)|$.
\end{proof}

The last two graph invariants of Table~\ref{t01} are exclusive to the transitive closure, $G^T(n)$, namely the  size and the number of paths in $G^T(n)$. 
Also surprisingly, the sequence for the size of $G^T(n)$ is not listed in~\cite{oeis}. 
\begin{theorem}[Size of $G^T(n)$]
\label{TmsizeT}
\begin{equation}
|E^T(n)| =  \sum_ {v \in V(n)}(|V(v)| - 1)
\end{equation}
\end{theorem}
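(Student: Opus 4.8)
The plan is to count the arcs of $G^T(n)$ by grouping them according to their head (the larger endpoint) and showing that each node $v\in V(n)$ receives exactly $|V(v)|-1$ incoming arcs. First I would fix an arbitrary node $b\in V(n)$ and describe its set of in-neighbours in $G^T(n)$. By the definition of $E^T(n)$ in~(\ref{e02}), an arc $(a,b)$ lies in $E^T(n)$ precisely when $a\in V(n)$, $a<b$, and $a\mid b$. The key small observation is that the membership condition $a\in V(n)$ is automatic here: if $a\mid b$ and $b\mid n$ then $a\mid n$, so $a\in V(n)$. Hence the in-neighbours of $b$ are exactly the divisors of $b$ that are strictly smaller than $b$, i.e.\ the set $V(b)\setminus\{b\}$, which has cardinality $|V(b)|-1$.

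Next I would invoke the elementary fact that in any directed graph the number of arcs equals the sum of the in-degrees of all vertices (every arc is counted once, at its head). Applying this to $G^T(n)$ and substituting the in-degree computed above gives
\begin{equation}
|E^T(n)| \;=\; \sum_{b\in V(n)} \Delta^-(b) \;=\; \sum_{b\in V(n)} \bigl(|V(b)|-1\bigr),
\end{equation}
and renaming the summation index $b$ to $v$ yields the stated formula. As an optional cross-check one can instead group arcs by their tail $a$: the out-degree of $a$ is the number of divisors of $n$ that are proper multiples of $a$, and these correspond bijectively to the proper divisors of $n/a$, giving out-degree $|V(n/a)|-1$; summing and reindexing via $v=n/a$ recovers the same total.

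I do not expect a genuine obstacle in this argument — it is a pure double-counting identity. The only point that deserves an explicit sentence (and which I would state carefully) is the automatic membership $a\in V(n)$ following from transitivity of divisibility, since without it the in-neighbour set of $b$ would not obviously be all of $V(b)\setminus\{b\}$. Everything else is the standard handshake-type bookkeeping for digraphs.
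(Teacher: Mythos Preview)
Your proposal is correct and follows essentially the same approach as the paper: compute the in-degree of each node $v$ as $|V(v)|-1$ and sum over all nodes. Your version is slightly more explicit in justifying why $a\mid b$ and $b\mid n$ force $a\in V(n)$, and your out-degree cross-check is a nice addition, but the core argument is identical to the paper's.
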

\begin{proof}
The number of incoming arcs to node $v$ is the number of divisors of $v$ that are less than $v$ itself. Thus the indegree of $v$ is $|V(v)| - 1$ and the sum of the indegrees of all nodes in $G^T(n)$ is the size of $G^T(n)$. 
\end{proof}

\begin{theorem}[Cardinality of $P(G^T(n))$] 
\label{Tmpnumt}
\begin{equation}
|P(G^T(n))|= 
\left\{
  \begin{array}{l l}
   \sum\limits_{v \in V(n)-\{n\}}|P(G^T(v))|  &  \textrm{if } \Omega(n) > 1  \\
    1 &  \textrm{if }  \Omega(n) \le  1
  \end{array} \right.
\end{equation}
\end{theorem}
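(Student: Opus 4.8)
The plan is to decompose $P(G^T(n))$ according to the \emph{penultimate} node of each path --- the node $v$ immediately before the sink $n$ --- exactly as in the proof of Theorem~\ref{Tmpnumh}, except that here the penultimate node may be \emph{any} proper divisor of $n$, not merely one at level $\Omega(n)-1$, because in the transitive closure every proper divisor $v$ of $n$ is joined to $n$ by the arc $(v,n)$.

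First I would settle the base case $\Omega(n)\le 1$. If $n=1$, then $V(n)=\{1\}$ and the only path from $1$ to $n$ is the empty sequence of arcs, so $|P(G^T(n))|=1$. If $n$ is prime, then $V(n)=\{1,n\}$ and, by~(\ref{e02}), $E^T(n)=\{(1,n)\}$, so again the single path $\langle(1,n)\rangle$ is the only one, giving $1$.

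For the recursive step assume $\Omega(n)>1$. A path $\pi\in P(G^T(n))$ is a sequence $\langle(1,a_1),(a_1,a_2),\cdots,(a_{k-1},n)\rangle$ with $1=a_0<a_1<\cdots<a_{k-1}<a_k=n$ and $a_j\mid a_{j+1}$ for every $j$. Put $v:=a_{k-1}\in V(n)-\{n\}$. By transitivity of the divides relation, $a_j\mid v$ for all $j\le k-1$, so $a_0,\cdots,a_{k-1}\in V(v)$ and each arc $(a_j,a_{j+1})$ with $j\le k-2$ lies in $E^T(v)$; hence the truncation $\langle(1,a_1),\cdots,(a_{k-2},v)\rangle$ is an element of $P(G^T(v))$. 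Conversely, for any $v\in V(n)-\{n\}$ and any $\pi'\in P(G^T(v))$, appending the arc $(v,n)$ --- which is in $E^T(n)$ since $v<n$ and $v\mid n$ --- produces an element of $P(G^T(n))$ with penultimate node $v$. These two operations are mutually inverse, so
\[
P(G^T(n))=\bigsqcup_{v\in V(n)-\{n\}}\bigl\{\,\pi'\cdot\langle(v,n)\rangle : \pi'\in P(G^T(v))\,\bigr\},
\]
a disjoint union, and taking cardinalities yields the claimed recursion. Termination of the recursion is clear, since $\Omega(v)<\Omega(n)$ for every $v\in V(n)-\{n\}$.

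The only genuinely delicate point is the identification of $G^T(v)$ with the subgraph of $G^T(n)$ induced on $V(v)$ --- equivalently, the observation that membership of a pair $(a,b)$ in $E^T$ depends only on the divisibility relation between $a$ and $b$ (not on the ambient $n$), combined with the fact that, once the penultimate node $v$ is fixed, \emph{every} earlier node of the path is forced to divide $v$. Everything else is routine bookkeeping about concatenating and truncating arc sequences.
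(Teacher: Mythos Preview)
Your proof is correct and follows the same decomposition by penultimate node that the paper uses; in fact you supply considerably more detail than the paper's three-sentence argument, which only sketches the forward direction (appending $(v,n)$ to a path in $P(G^T(v))$) without explicitly verifying the inverse map or the disjointness. Your treatment of the base cases and the observation that $G^T(v)$ coincides with the subgraph of $G^T(n)$ induced on $V(v)$ are welcome additions that the paper leaves implicit.
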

\begin{proof}
Let $P(G^T(v))$ be the set of all paths from $1$ to $v$ where $v \ne n$. The addition of the arc $(v,n)$ to each path in $P(G^T(v))$ yields a path from $1$ to $n$. Thus, summing over all $v \in V(n) - \{n\}$ is equal to $|P(G^T(n))|$.
\end{proof}
The integer sequence of $|P(G^T(n))|$ matches with 
A002033~\cite{oeis} and described as the number of {\em perfect partitions} of $n$~\cite{oeis,Comtet1974}. 
Thus, the interpretation as the number of paths from $1$ to $n$ is part of the original contributions of this work.

\section{Graph Invariant Integer Sequences ordered by Prime Signature}
\label{s3}
The set of positive integers $> 1$ is partitioned by their prime signatures as exemplified in Table~\ref{t02}. 
\begin{table}[b]\vspace*{-3ex}
\caption[]{Partitions of integers ($> 1$) by prime signature congruency.} 
\label{t02}
\centering
{\footnotesize
\begin{tabular}{p{0.32in}p{3.4in}p{0.9in}} \hline
\multicolumn{1}{c}{$M$ / $S$} &
\multicolumn{1}{c}{Integer sequence for $n = 1,\cdots,20$}  &
\multicolumn{1}{c}{OEIS} \\ \hline \hline
(1) & 2, 3, 5, 7, 11, 13, 17, 19, 23, 29, 31, 37, 41, 43, 47, 53, 59, 61, 67, 71, $\cdots$   & A000040 \newline  (Primes)\\ \hline
(2) & 4, 9, 25, 49, 121, 169, 289, 361, 529, 841, 961, 1369, 1681, 1849, 2209, 2809, 3481, 3721, 4489, 5041, $\cdots$   & A001248 \newline(Squared prime)\\ \hline
(1,1) & 6, 10, 14, 15, 21, 22, 26, 33, 34, 35, 38, 39, 46, 51, 55, 57, 58, 62, 65, 69, $\cdots$   & A006881 \\ \hline
(3) & 8, 27, 125, 343, 1331, 2197, 4913, 6859, 12167, 24389, 29791, 50653, 68921, 79507, 103823, 148877, 205379, 226981, 300763, 357911, $\cdots$   & A030078\newline (Cubed prime)\\ \hline
(2,1)  & 12, 18, 20, 28, 44, 45, 50, 52, 63, 68, 75, 76, 92, 98, 99, 116, 117, 124, 147, 148, $\cdots$   & A054753\\ \hline
(1,1,1) &30, 42, 66, 70, 78, 102, 105, 110, 114, 130, 138, 154, 165, 170, 174, 182, 186, 190, 195, 222, $\cdots$   & A007304 \\ \hline
\multicolumn{1}{c}{$\vdots$} &
\multicolumn{1}{c}{$\vdots$}  &
\multicolumn{1}{c}{$\vdots$} \\
\end{tabular} }
\end{table}

\begin{definition}
$n_x$ and $n_y$ are {\em prime signature congruent}  iff  $M(n_x) =  M(n_y)$. 
\end{definition}

Let $S(n)$ be a representative sequence of the {\em prime signature} $M(n)$ written in descending order. 
More formally, $S(n)  = (s_1, s_2, \cdots, s_\omega)$ is the permutation of the multiset, $M(n) = [m_1, m_2, \cdots, m_\omega]$  such that $s_1 \ge s_2 \ge \cdots \ge s_\omega$.  
For example, $S(4500) = S(33075) = (3,2,2)$ because $M(4500 = 2^2\times3^2\times5^3) = [2, 2, 3]$ has the same  prime signature as $M(33075 = 3^3\times5^2\times7^2) = [3,2,2]$.  

Albeit there are numerous ways of ordering  $S$, the set of all $S(n)$, two particular orderings such as the {\em graded colexicographic} and {\em canonical} orders of $S$ appear in the literature~\cite{HW1979,AS1972}. First in the {\em graded colexicographic order}, $S$ are first grouped by $\Omega(S)$ and then by $\omega(S)$ in ascending order. Finally, the reverse lexicographic order is applied to the sub-group. It is closely related to the {\em graded reflected colexicographic order} used and denoted as $\pi$  in~\cite{AS1972} . 
Let $LI(S)$  denote the least integer of a prime signature  in the graded (reflected or not) colexicographic order. This sequence is listed as A036035 in~\cite{oeis}. 
\begin{figure}[htb]
\begin{center}
{\small
\begin{tabular}{ llllll }
1 (0) & 2 (1) & 3 (2) & 5 (3) & 8 (4) & 13 (5) \\
 &  & 4 (1,1) & 6 (2,1) & 9 (3,1) & 14 (4,1)  \\
  &  &  & 7 (1,1,1)  & 10 (2,2) & 15 (3,2)  \\
  &  &  &  & 11 (2,1,1) & 16 (3,1,1)    \\
  &  &  &  & 12 (1,1,1,1)  & 17  (2,2,1)   \\
  &  &  &  &  & 18 (2,1,1,1)    \\
  &  &  &  &  & 19 (1,1,1,1,1) \\
 \end{tabular}}\\
{\small 
\begin{tabular}{ l l l l l l ll }
Index & Graded Colexicographic & Canonical\\
20 & (6) & (6) \\
21 & (5,1) & (5,1)  \\
22 & (4,2) & (4,2) \\
23 & (3,3) & (4,1,1) \\
24 & (4,1,1) & (3,3)  \\
25 & (3,2,1) & (3,2,1)  \\
26 &  (2,2,2) &  (3,1,1,1) \\
27 & (3,1,1,1) & (2,2,2) \\
28 & (2,2,1,1) & (2,2,1,1) \\
29 & (2,1,1,1,1) & (2,1,1,1,1) \\
30 & (1,1,1,1,1,1) & (1,1,1,1,1,1)
 \end{tabular}}
\end{center}
\caption{\label{f05} First 30 prime signatures in colexicographic  and canonical orders.}
\end{figure}

Next, the {\em canonical order},  also known as  the {\em graded reverse lexicographic order}, is often used to order the partitions~\cite{HW1979}. It  first groups prime signatures by $\Omega(S)$ and then uses the reverse lexicographic order. Although this order is  identical to the {\em graded colexicographic} order for the first 22 prime signatures, they clearly differ at 23, 24, 26, 27, etc., as seen in Figure~\ref{f05}. 
The integer sequence of the least integer, $LI(S)$  in canonical order is listed as  the Canonical partition sequence  encoded by prime factorization (A063008) in~\cite{oeis}.

\begin{figure}[htb]
\begin{center}
\resizebox{5.1in}{!}{\includegraphics{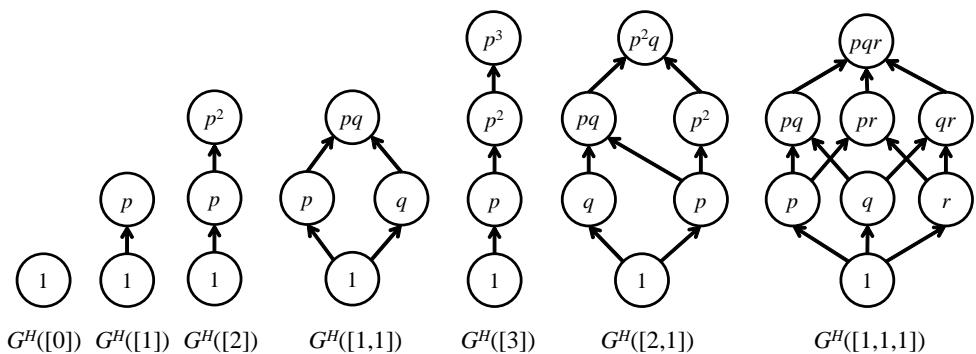}}\\
\end{center}
\caption{\label{f06} First seven Hasse diagrams ordered by prime signatures.}
\end{figure}
The $S(n)$ determine the structure of  $G^H(S(n))$ and $G^T(S(n))$ as shown in Figure~\ref{f06} with the first few simple {\em Hasse diagrams}. 
%\begin{theorem}[Isomorphism]
%\label{TmIsomo}
%For two different positive integers, $n_x$ and $n_y$, \newline 
%\[G^H(n_x) \equiv  G^H(n_y) \textrm{ and } G^T(n_x) \equiv  G^T(n_y) \textrm{ iff } M(n_x) =  M(n_y) \textrm{ or } S(n_x) =  S(n_y).\]
%\end{theorem}
%\begin{proof}
%(A formal proof was done !!!!!! to be included!!!)  
% blah blah blah blah blah blah and 
%thus blah blah blah blah. 
%\end{proof}
%By Theorem~\ref{TmIsomo}, 
All integer sequences of graph invariants in natural order in Table~\ref{t01} can be ordered in the graded colexicographic order (Table~\ref{t04}) and the canonical order (Table~\ref{t05}). 
However, very little  has been investigated concerning  these sequences since most of them are in fact new. 
In~\cite{AS1972}, Abramowitz and Stegun labeled $\Omega(S)$, $\omega(S)$, and $|P^H(S)|$ in the graded colexicographic order as $n$, $m$, and $M_1$, respectively. Only these three graph invariants  and the number of divisors, $|V(S)|$ are found  in~\cite{oeis} for the graded colexicographic order. Only  $|P^H(S)|$ is found  in~\cite{oeis} for the canonical order.

%\begin{table}[b]\vspace*{-3ex}
%\caption[]{Colexico index of  ($n = 1,\cdots,100$).} 
%\label{t03}
%\centering
%{\footnotesize
%\begin{tabular}{p{4.5in}c} \hline
%\multicolumn{1}{c}{Integer sequence for $n = 1,\cdots,100$}  &
%\multicolumn{1}{c}{OEIS} \\ \hline \hline
%1, 2, 2, 3, 2, 4, 2, 5, 3, 4, 2, 6, 2, 4, 4, 8, 2, 5, 2, 6, 4, 4, 2, 9, 3, 4, 5, 6, 2, 7, 2, 13, 4, 4, 4, 10, 2, 4, 4, 9, 2, 7, 2, 6, 6, 4, 2, 14, 3, 5, 4, 6, 2, 8, 4, 9, 4, 4, 2, 11, 2, 4, 6, 20, 4, 7, 2, 6, 4, 7, 2, 15, 2, 4, 5, 6, 4, 7, 2, 14, 8, 4, 2, 11, 4, 4, 4, 9, 2, 10, 4, 6, 4, 4, 4, 21, 2, 5, 6, 10, $\cdots$   & - \\ \hline
%\end{tabular} }
%\end{table}

\section{Conclusion}
\label{s4}
In this article, fourteen graph invariants were investigated for two classic graphs, the {\em Hasse diagram}, $G^H(n)$ and its {\em transitive closure}, $G^T(n)$.  
Integer sequences with their first two hundred entries in  natural order by $n$ are computed and  compared to existing sequences in the On-Line Encyclopedia of Integer Sequences. 
Five new integer sequences in natural order, shown in Table~\ref{t01} were discovered, i.e., not found in~\cite{oeis}. 

New interpretations based on graph theory are provided for sequences found in ~\cite{oeis}.    
Ten (Table~\ref{t04}) and  thirteen (Table~\ref{t05}) new  integer sequences were discovered  for the  graded colexicographic and canonical orders, respectively. 

Here are some  intriguing  conjectures stated as open problems.

\begin{conjecture}[Cardinality of disjoint paths]  Let $P'(G^H(n))$ be the set of {\em disjoint paths}. $|P'(G^H(n))|=\omega(n)$?
\label{Tmdpnum}
\end{conjecture}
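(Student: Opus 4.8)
The plan is to prove $|P'(G^H(n))|=\omega(n)$ for $n>1$ in two steps: a short upper bound, and an explicit construction of $\omega(n)$ internally vertex-disjoint paths from $1$ to $n$. For the upper bound, observe that by the definition of $E^H(n)$ the out-neighbours of the node $1$ are precisely the $\omega(n)$ prime divisors $p_1,\dots,p_\omega$ of $n$ (a node $b$ covered by $1$ has no divisor strictly between $1$ and $b$, hence $b$ is prime); this is also the content of $|V_1(n)|=\omega(n)$ in Theorem~\ref{Tmuniqp}. If $n$ is prime then $\omega(n)=1$ and $G^H(n)$ has only the path $1\to n$; otherwise each such neighbour is an \emph{internal} node of every path through it, so the paths in any internally vertex-disjoint family have pairwise distinct first nodes and there are at most $\omega(n)$ of them. (The same bound holds for arc-disjoint paths, as only $\omega(n)$ arcs leave the node $1$.)

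For the lower bound I would build the paths in the exponent-vector model of Figure~\ref{f02}: identify $v$ with $M_n(v)\in\prod_{i=1}^{\omega}\{0,1,\dots,m_i\}$, so $1=(0,\dots,0)$ and $n=(m_1,\dots,m_\omega)$, and recall that every arc of $G^H(n)$ multiplies a node by a single prime, i.e.\ adds $1$ to one coordinate. For $i=1,\dots,\omega$ let $Q_i$ be the path that raises the coordinates to their maxima one at a time in the \emph{cyclic} order $i,i+1,\dots,\omega,1,\dots,i-1$: first raise coordinate $i$ from $0$ to $m_i$, then coordinate $i+1$, and so on, finishing at $n$. Each $Q_i$ is a legitimate path in $G^H(n)$ because every single-coordinate increment $v\to v\,p_k$ is a Hasse arc ($p_k$ being prime). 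The cyclic shift is essential: the naive variant that always fills in the order $1,2,\dots,\omega$ fails already for $n=30$, where two of the resulting paths meet at the internal node $(1,1,0)$.

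The heart of the proof is that $Q_i$ and $Q_j$ are internally disjoint whenever $i\ne j$. For this I would use the following description of the nodes lying on $Q_i$: writing $H(v)=\{k:v_k>0\}$ for the support of $v$ and $L(v)=\{k:v_k<m_k\}$ for its co-support, a node $v$ lies on $Q_i$ exactly when $H(v)$ is a cyclic initial segment and $L(v)$ a cyclic final segment of the listing $i,i+1,\dots,i-1$; indeed, along $Q_i$ a prefix of the coordinates sit at their maxima, at most one coordinate is strictly between $0$ and its maximum, and the rest are still $0$. Now if some $v$ lies on both $Q_i$ and $Q_j$, then $H(v)$ is simultaneously a cyclic interval of length $|H(v)|$ starting at $i$ and one starting at $j$; two cyclic intervals of equal length in a cycle of length $\omega$ coincide only if they start at the same point or if that length is $0$ or $\omega$, so (as $i\ne j$) $|H(v)|\in\{0,\omega\}$, and likewise $|L(v)|\in\{0,\omega\}$. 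But $H(v)\cup L(v)=\{1,\dots,\omega\}$ for every node, while $H(v)\cap L(v)$ has at most one element because $v$ lies on $Q_i$; hence $|H(v)|=|L(v)|=\omega$ forces $\omega\le1$. So for $\omega\ge2$ the $Q_i$ share no internal node, they are $\omega(n)$ internally vertex-disjoint paths from $1$ to $n$, and combined with the upper bound this yields $|P'(G^H(n))|=\omega(n)$.

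The step I expect to be the main obstacle is precisely this disjointness: since the routing genuinely matters (the fixed-order construction provably fails for every $\omega\ge3$), one needs a checkable certificate that the cyclic routing works, and the support/co-support characterization above is the cleanest one I know, after which the conclusion reduces to the elementary fact about equal-length cyclic intervals. The only remaining care is with the degenerate cases $\omega=1$ (a single path, consistent with $\omega(n)=1$) and, if one chooses to assign it a value at all, $n=1$.
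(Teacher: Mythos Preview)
Your argument is correct, and in fact it resolves what the paper leaves open: this statement is listed in the paper as a \emph{conjecture} among ``intriguing conjectures stated as open problems,'' with no proof given. So there is no paper proof to compare against; you have supplied one.

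A few remarks on the write-up. Your upper bound is exactly right and is already implicit in the paper via Theorem~\ref{Tmuniqp} (or Lemma~\ref{Tmbinout}): the $\omega(n)$ primes form a vertex cut separating $1$ from $n$. For the lower bound, your cyclic-order construction is the natural one, and your certificate via $H(v)$ and $L(v)$ is clean. Two small points worth tightening: first, you only use (and only justify) the forward implication ``$v$ on $Q_i$ $\Rightarrow$ $H(v),L(v)$ are cyclic segments rooted at $i$,'' so ``exactly when'' overstates what is needed and proved; second, in the case split on $|H(v)|,|L(v)|\in\{0,\omega\}$ you silently discard $|H(v)|=0$ and $|L(v)|=0$ as corresponding to the endpoints $v=1$ and $v=n$; saying this explicitly makes the ``share no internal node'' conclusion airtight. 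The degenerate cases $\omega(n)=1$ and $n=1$ are handled as you indicate.

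As an aside, once the construction is in hand the result is equivalent (via Menger's theorem) to the statement that level $1$ is a minimum $1$--$n$ vertex separator in $G^H(n)$; your explicit disjoint paths are exactly what certifies that no smaller separator exists.
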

%\begin{proof}
%$P'(G^H(n))$ has exactly one disjoint path containing each node at level $\Omega(n) - 1$. 
%Hence, $|P'(G^H(n))|=\omega(n) $ by Theorem~\ref{Tmuniqp}. 
%\end{proof}

\begin{conjecture}[Node width at middle level]   $W_v(n) = |V_{\lceil \Omega(n)/2\rceil}(n)|$?
\label{Conj5}
\end{conjecture}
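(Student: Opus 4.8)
The plan is to reformulate the level sizes as the coefficients of a product of ``interval'' polynomials and then prove that this coefficient sequence is both \emph{palindromic} and \emph{unimodal}; those two properties together force the maximum to occur at the central level.

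First I would record, via Lemma~\ref{Lmlevel}, that a node of $V(n)$ lies in $V_l(n)$ precisely when its exponent sequence $(v_1,\dots,v_\omega)$ with $0\le v_i\le m_i$ satisfies $v_1+\cdots+v_\omega=l$. Collecting by total degree then gives, with $a_l:=|V_l(n)|$,
\begin{equation}
\sum_{l=0}^{\Omega(n)} a_l\,q^l \;=\; \prod_{i=1}^{\omega(n)}\bigl(1+q+q^2+\cdots+q^{m_i}\bigr).
\end{equation}
(Evaluating at $q=1$ recovers Theorem~\ref{Tmorder}, and note that $\lceil\Omega(n)/2\rceil=\Omega(n)-\lfloor\Omega(n)/2\rfloor$, so by the symmetry below the choice of rounding in the statement is immaterial.) It therefore suffices to show that the coefficient sequence of this polynomial is palindromic and unimodal.

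Palindromy, $a_l=a_{\Omega(n)-l}$, is exactly Theorem~\ref{Tmsymmv} (equivalently: each factor $1+q+\cdots+q^{m_i}$ is palindromic and a product of palindromic polynomials is palindromic). For unimodality I would invoke the classical fact that a product of palindromic polynomials with nonnegative coefficients, each unimodal, is itself palindromic and unimodal, and I would give a short self-contained proof. One route is induction on $\omega(n)$: the base case $\omega(n)=1$ is the all-ones sequence; in the inductive step, multiplying a palindromic unimodal polynomial $\sum_j a_j q^j$ of degree $d$ (with $a_j:=0$ outside $0\le j\le d$) by $1+q+\cdots+q^{m}$ replaces $(a_j)$ by the window sums $b_k=\sum_{j=k-m}^{k}a_j$, which is palindromic about $(d+m)/2$; from $b_{k+1}-b_k=a_{k+1}-a_{k-m}$ and the unimodality of $(a_j)$ about $d/2$ one checks that $b_{k+1}-b_k\ge0$ when $k+1\le(d+m)/2$ and $\le0$ when $k+1\ge(d+m)/2$, so $(b_k)$ is unimodal. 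A cleaner alternative is to write each palindromic unimodal polynomial as a nonnegative combination of ``atoms'' $q^{j}+q^{j+1}+\cdots+q^{e-j}$ and observe that the product of two atoms is a symmetric trapezoidal sequence, hence palindromic and unimodal; the product $\prod_i(1+q+\cdots+q^{m_i})$ is built up this way and is therefore palindromic and unimodal, with centre $\tfrac12\sum_i m_i=\Omega(n)/2$.

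It remains to combine the two facts: a nonnegative sequence that is palindromic about $\Omega(n)/2$ and unimodal attains its maximum at the central index, which is $\Omega(n)/2=\lceil\Omega(n)/2\rceil$ if $\Omega(n)$ is even, and is shared by $\lfloor\Omega(n)/2\rfloor$ and $\lceil\Omega(n)/2\rceil$ if $\Omega(n)$ is odd. In every case $W_v(n)=\max_l a_l=a_{\lceil\Omega(n)/2\rceil}=|V_{\lceil\Omega(n)/2\rceil}(n)|$, which is the claim. The main obstacle is the unimodality step: the palindromy is immediate from Theorem~\ref{Tmsymmv}, and the passage ``palindromic $+$ unimodal $\Rightarrow$ maximum at the centre'' is routine, but proving that the coefficient sequence of $\prod_i(1+q+\cdots+q^{m_i})$ is unimodal is the real content — it is the product-of-unimodal-palindromes theorem, closely tied to the unimodality of restricted-partition and Gaussian-binomial coefficients — and the window-sum induction sketched above has to be carried out with care about the out-of-range zero coefficients and the parity of $d+m$.
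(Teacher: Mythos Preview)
The paper does not prove this statement at all: it is listed among the ``intriguing conjectures stated as open problems'' in Section~\ref{s4}, with no argument given. So there is no paper proof to compare against; rather, your proposal \emph{resolves} the conjecture.

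Your approach is correct. The generating-function identity $\sum_{l} |V_l(n)|\,q^l=\prod_{i=1}^{\omega}(1+q+\cdots+q^{m_i})$ is exactly the enumeration of exponent vectors by their sum (Lemma~\ref{Lmlevel}); palindromy is Theorem~\ref{Tmsymmv}; and the unimodality of this product is the classical ``product of symmetric unimodal polynomials is symmetric unimodal'' theorem. Both arguments you sketch for the inductive step are valid. For the window-sum version, the key inequality $a_{k+1}\ge a_{k-m}$ whenever $2k+1\le d+m$ follows cleanly once you note that $k-m\le d-(k+1)$, so either $k-m<0$ (trivial), or $0\le k-m\le d-k-1$ with both indices at distance at least as far from the centre $d/2$ as $k+1$ is, whence symmetry plus unimodality of $(a_j)$ gives the inequality; the case $k+1>d$ forces $k-m<0$ and is vacuous. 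The atom-decomposition version is equally clean and is the standard textbook route.

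Two remarks worth incorporating when you write this up. First, this is not new mathematics: the poset $V(n)$ under divisibility is the product of chains $C_{m_1+1}\times\cdots\times C_{m_\omega+1}$, and the unimodality of its rank sequence is a well-known result (it is rank-symmetric, rank-unimodal, and in fact strongly Sperner; see e.g.\ Stanley's survey on log-concave and unimodal sequences, or Anderson's \emph{Combinatorics of Finite Sets}). Citing this would strengthen the write-up and situate Conjecture~\ref{Conj5} as a rediscovery rather than an open problem. Second, your parenthetical that $\lceil\Omega(n)/2\rceil$ and $\lfloor\Omega(n)/2\rfloor$ give the same value is the right way to dispose of the rounding issue, and it also shows that the level achieving $W_v(n)$ need not be unique.
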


\begin{conjecture}[Relationship between widths by nodes and arcs]  There always exists a  level $l$ such that 
if $|V_l(n)| = W_v(n)$, then  $|E^H_l(n)| = W_e(n)$.  
\begin{equation}
\argmax_{l \in \{0,..,\Omega(n)-1\}}|E^H_l(n)| = \argmax_{l \in \{0,..,\Omega(n)-1\}}|V_l(n)|?
\end{equation}
\label{Conj6}
\end{conjecture}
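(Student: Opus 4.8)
The plan is to encode the two level profiles $l\mapsto|V_l(n)|$ and $l\mapsto|E^H_l(n)|$ as coefficient sequences of palindromic polynomials, and then invoke the classical fact that a product of symmetric unimodal polynomials with nonnegative coefficients is again symmetric and unimodal, and that a nonnegative sum of symmetric unimodal polynomials sharing one centre of symmetry inherits both properties. Since $G^H(n)$ depends only on $M'(n)=(m_1,\dots,m_\omega)$, Lemma~\ref{Lmlevel} identifies $|V_l(n)|$ with the number of integer points $(v_1,\dots,v_\omega)$ satisfying $0\le v_i\le m_i$ and $\sum_i v_i=l$, so
\begin{equation}
V(x)\;:=\;\sum_{l\ge 0}|V_l(n)|\,x^l\;=\;\prod_{i=1}^{\omega}\bigl(1+x+\cdots+x^{m_i}\bigr).
\end{equation}
For the arcs, an arc leaving a node of $V_l(n)$ raises exactly one coordinate $v_i$ with $v_i<m_i$; summing over the coordinate that is increased and counting the corresponding shortened boxes gives
\begin{equation}
E(x)\;:=\;\sum_{l\ge 0}|E^H_l(n)|\,x^l\;=\;\sum_{i=1}^{\omega}\bigl(1+x+\cdots+x^{m_i-1}\bigr)\prod_{j\ne i}\bigl(1+x+\cdots+x^{m_j}\bigr).
\end{equation}
(The Cartesian-product-of-paths description of $G^H(n)$ used in Algorithm~\ref{Ahsize} yields the same two identities.)

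Next I would run the unimodality argument and locate the peaks. Each factor $1+x+\cdots+x^{k-1}$ is palindromic with centre $(k-1)/2$ and is trivially unimodal, so $V(x)$ is symmetric and unimodal about $\Omega(n)/2$. Every summand of $E(x)$ is a product of such factors of total degree $\Omega(n)-1$, hence symmetric and unimodal about $(\Omega(n)-1)/2$; since all the summands share this centre, $E(x)$ is symmetric and unimodal about $(\Omega(n)-1)/2$. A symmetric unimodal sequence attains its maximum at every integer nearest its centre, so $W_v(n)=|V_l(n)|$ for $l\in\{\lfloor\Omega(n)/2\rfloor,\lceil\Omega(n)/2\rceil\}$ (which would also settle Conjecture~\ref{Conj5}) and $W_e(n)=|E^H_l(n)|$ for $l\in\{\lfloor(\Omega(n)-1)/2\rfloor,\lceil(\Omega(n)-1)/2\rceil\}$. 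A short parity check then shows $l_0:=\lfloor\Omega(n)/2\rfloor$ lies in both maximising sets: if $\Omega(n)$ is even then $l_0=\Omega(n)/2$ is the right endpoint of the $E$-maximisers, and if $\Omega(n)$ is odd then $l_0=(\Omega(n)-1)/2$ is the exact centre of $E(x)$. Hence $|V_{l_0}(n)|=W_v(n)$ and $|E^H_{l_0}(n)|=W_e(n)$, which is the desired conclusion.

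Two remarks on where the real difficulty sits. The only nontrivial ingredient above is closure of symmetric unimodal polynomials under products; everything else is bookkeeping. Also, the displayed identity $\argmax_l|E^H_l(n)|=\argmax_l|V_l(n)|$ is strictly stronger than what this argument delivers and is in fact false in general — for $n=30$, $V(x)=(1+x)^3$ has $\argmax=\{1,2\}$ while $E(x)=3(1+x)^2$ has $\argmax=\{1\}$ — so the statement one should actually prove is the existential one (the two argmax sets meet), as in the prose of the conjecture. Determining the two argmax sets exactly, or characterising when they coincide, would require controlling the flat top of $V(x)$ and $E(x)$, i.e.\ deciding strict unimodality of these box polynomials, and that is the step I would expect to be genuinely delicate.
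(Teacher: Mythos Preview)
Your argument is correct, and it goes well beyond what the paper does: in the paper this statement is listed only as an open conjecture in the Conclusion, with no proof or proof sketch offered. Your generating-function encoding is exactly right --- Lemma~\ref{Lmlevel} gives $V(x)=\prod_i(1+x+\cdots+x^{m_i})$, and the arc count $E(x)=\sum_i(1+x+\cdots+x^{m_i-1})\prod_{j\ne i}(1+x+\cdots+x^{m_j})$ follows from the Cartesian-product description (equivalently from Definition~\ref{dlev}). The key step, that a product of palindromic unimodal polynomials with nonnegative coefficients is again palindromic unimodal, is classical (Stanley, \emph{Log-concave and unimodal sequences}, Prop.~1), and your use of it is clean: all summands in $E(x)$ share the centre $(\Omega(n)-1)/2$, so their sum is symmetric unimodal about that point. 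The parity check picking out $l_0=\lfloor\Omega(n)/2\rfloor$ as a common maximiser is correct in both cases.

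Two further points worth recording. First, your unimodality of $V(x)$ with peak at $\lfloor\Omega(n)/2\rfloor$ already settles Conjecture~\ref{Conj5} of the paper as well, so you have resolved two of the three open problems simultaneously. Second, your observation that the displayed $\argmax$ identity is literally false (the $n=30$ example with $\argmax|V_l|=\{1,2\}$ versus $\argmax|E^H_l|=\{1\}$) is a genuine correction to the paper's formulation; the prose version, read as ``the two argmax sets intersect,'' is the right statement, and that is what you prove. You might also note that the prose as literally written (``there exists $l$ such that \emph{if} $|V_l|=W_v$ \emph{then} $|E^H_l|=W_e$'') is trivially satisfied by any $l$ outside the $V$-argmax, so the intended content must be the intersection statement you establish.
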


Other future work includes finding either a closed and/or a simpler recursive formula for the cardinality of $P(G^T(n))$  in Theorem~\ref{Tmpnumt}.
Note that entries for $|P^T(S)|$  in Table~\ref{t04} and~\ref{t05} are less than 50 as computing  $|P^T(S)|$ by  Theorem~\ref{Tmpnumt} took too long time.

\oneappendix % use \appendix if you have more than one appendix
\section{Integer Sequences by Prime Signatures}
\begin{table}[btp]\vspace*{-3ex}
\caption[]{divides relation graph invariants in graded colexicographic order} 
\label{t04}
\centering
{\footnotesize
\begin{tabular}{cp{3.7in}p{0.4in}} \hline
\multicolumn{1}{c}{Invariant} &
\multicolumn{1}{c}{Integer sequence for $S = [0],\cdots,[4,4]$}  &
\multicolumn{1}{c}{OEIS} \\ \hline \hline
$LI(S)$ &  
1, 2, 4, 6, 8, 12, 30, 16, 24, 36, 60, 210, 32, 48, 72, 120, 180, 420, 2310, 64, 96, 144, 216, 240, 360, 900, 840, 1260, 4620, 30030, 128, 192, 288, 432, 480, 720, 1080, 1800, 1680, 2520, 6300, 9240, 13860, 60060, 510510, 256, 384, 576, 864, 1296, $\cdots$   & A036035 \\ \hline
$|V(S)|$ &1, 2, 3, 4, 4, 6, 8, 5, 8, 9, 12, 16, 6, 10, 12, 16, 18, 24, 32, 7, 12, 15, 16, 20, 24, 27, 32, 36, 48, 64, 8, 14, 18, 20, 24, 30, 32, 36, 40, 48, 54, 64, 72, 96, 128, 9, 16, 21, 24, 25, $\cdots$   & A074139\\ \hline
$|E^H(S)|$ & 0, 1, 2, 4, 3, 7, 12, 4, 10, 12, 20, 32, 5, 13, 17, 28, 33, 52, 80, 6, 16, 22, 24, 36, 46, 54, 72, 84, 128, 192, 7, 19, 27, 31, 44, 59, 64, 75, 92, 116, 135, 176, 204, 304, 448, 8, 22, 32, 38, 40,  $\cdots$   & -\\ \hline
$\Omega(S)$ &0, 1, 2, 2, 3, 3, 3, 4, 4, 4, 4, 4, 5, 5, 5, 5, 5, 5, 5, 6, 6, 6, 6, 6, 6, 6, 6, 6, 6, 6, 7, 7, 7, 7, 7, 7, 7, 7, 7, 7, 7, 7, 7, 7, 7, 8, 8, 8, 8, 8,  $\cdots$   &A036042\\ \hline
$\omega(S)$ &0, 1, 1, 2, 1, 2, 3, 1, 2, 2, 3, 4, 1, 2, 2, 3, 3, 4, 5, 1, 2, 2, 2, 3, 3, 3, 4, 4, 5, 6, 1, 2, 2, 2, 3, 3, 3, 3, 4, 4, 4, 5, 5, 6, 7, 1, 2, 2, 2, 2, $\cdots$   & A036043 \\ \hline
$W_v(S)$ & 1, 1, 1, 2, 1, 2, 3, 1, 2, 3, 4, 6, 1, 2, 3, 4, 5, 7, 10, 1, 2, 3, 4, 4, 6, 7, 8, 10, 14, 20, 1, 2, 3, 4, 4, 6, 7, 8, 8, 11, 13, 15, 18, 25, 35, 1, 2, 3, 4, 5, $\cdots$   & -\\ \hline
$W_e(S)$ & 0, 1, 1, 2, 1, 3, 6, 1, 3, 4, 7, 12, 1, 3, 5, 8, 11, 18, 30, 1, 3, 5, 6, 8, 12, 15, 19, 24, 38, 60, 1, 3, 5, 7, 8, 13, 16, 19, 20, 30, 37, 46, 58, 90, 140, 1, 3, 5, 7, 8, $\cdots$   & -\\ \hline
$\Delta(S)$ & 0, 1, 2, 2, 2, 3, 3, 2, 3, 4, 4, 4, 2, 3, 4, 4, 5, 5, 5, 2, 3, 4, 4, 4, 5, 6, 5, 6, 6, 6, 2, 3, 4, 4, 4, 5, 5, 6, 5, 6, 7, 6, 7, 7, 7, 2, 3, 4, 4, 4, $\cdots$   & -\\ \hline
$|P^H(S)|$ & 1, 1, 1, 2, 1, 3, 6, 1, 4, 6, 12, 24, 1, 5, 10, 20, 30, 60, 120, 1, 6, 15, 20, 30, 60, 90, 120, 180, 360, 720, 1, 7, 21, 35, 42, 105, 140, 210, 210, 420, 630, 840, 1260, 2520, 5040, 1, 8, 28, 56, 70,  $\cdots$   & A036038\\ \hline
$|V_E(S)|$ &1, 1, 2, 2, 2, 3, 4, 3, 4, 5, 6, 8, 3, 5, 6, 8, 9, 12, 16, 4, 6, 8, 8, 10, 12, 14, 16, 18, 24, 32, 4, 7, 9, 10, 12, 15, 16, 18, 20, 24, 27, 32, 36, 48, 64, 5, 8, 11, 12, 13, $\cdots$   & - \\ \hline
$|V_O(S)|$ & 0, 1, 1, 2, 2, 3, 4, 2, 4, 4, 6, 8, 3, 5, 6, 8, 9, 12, 16, 3, 6, 7, 8, 10, 12, 13, 16, 18, 24, 32, 4, 7, 9, 10, 12, 15, 16, 18, 20, 24, 27, 32, 36, 48, 64, 4, 8, 10, 12, 12, $\cdots$   & -\\ \hline
$|E_E(S)|$ & 0, 1, 1, 2, 2, 4, 6, 2, 5, 6, 10, 16, 3, 7, 9, 14, 17, 26, 40, 3, 8, 11, 12, 18, 23, 27, 36, 42, 64, 96, 4, 10, 14, 16, 22, 30, 32, 38, 46, 58, 68, 88, 102, 152, 224, 4, 11, 16, 19, 20,   $\cdots$   & -\\ \hline
$|E_O(S)|$ &0, 0, 1, 2, 1, 3, 6, 2, 5, 6, 10, 16, 2, 6, 8, 14, 16, 26, 40, 3, 8, 11, 12, 18, 23, 27, 36, 42, 64, 96, 3, 9, 13, 15, 22, 29, 32, 37, 46, 58, 67, 88, 102, 152, 224, 4, 11, 16, 19, 20,   $\cdots$   & -\\ \hline

$|E^T(S)|$ & 0, 1, 3, 5, 6, 12, 19, 10, 22, 27, 42, 65, 15, 35, 48, 74, 90, 138, 211, 21, 51, 75, 84, 115, 156, 189, 238, 288, 438, 665, 28, 70, 108, 130, 165, 240, 268, 324, 365, 492, 594, 746, 900, 1362, 2059, 36, 92, 147, 186, 200,  $\cdots$   & - \\ \hline
$|P^T(S)|$ & 1, 1, 2, 3, 4, 8, 13, 8, 20, 26, 44, 75, 16, 48, 76, 132, 176, 308, 541, 32, 112, 208, 252, 368, 604, 818, 1076, 1460, 2612, 4683, 64, 256, 544, 768, 976, 1888, 2316, 3172, 3408, 5740, 7880, 10404, 14300, 25988, $\cdots$   & -  \\ \hline
\end{tabular} }
\end{table}

\begin{table}[btp]\vspace*{-3ex}
\caption[]{divides relation graph invariants in canonical order} 
\label{t05}
\centering
{\footnotesize
\begin{tabular}{cp{3.7in}p{0.4in}} \hline
\multicolumn{1}{c}{Invariant} &
\multicolumn{1}{c}{Integer sequence for $S = [0],\cdots, [5,3]$}  &
\multicolumn{1}{c}{OEIS} \\ \hline \hline

$LI(S)$ &1, 2, 4, 6, 8, 12, 30, 16, 24, 36, 60, 210, 32, 48, 72, 120, 180, 420, 2310, 64, 96, 144, 240, 216, 360, 840, 900, 1260, 4620, 30030, 128, 192, 288, 480, 432, 720, 1680, 1080, 1800, 2520, 9240, 6300, 13860, 60060, 510510, 256, 384, 576, 960, 864, $\cdots$   & A063008 \\ \hline
$|V(S)|$ & 1, 2, 3, 4, 4, 6, 8, 5, 8, 9, 12, 16, 6, 10, 12, 16, 18, 24, 32, 7, 12, 15, 20, 16, 24, 32, 27, 36, 48, 64, 8, 14, 18, 24, 20, 30, 40, 32, 36, 48, 64, 54, 72, 96, 128, 9, 16, 21, 28, 24, $\cdots$   & - \\ \hline
$|E^H(S)|$ & 0, 1, 2, 4, 3, 7, 12, 4, 10, 12, 20, 32, 5, 13, 17, 28, 33, 52, 80, 6, 16, 22, 36, 24, 46, 72, 54, 84, 128, 192, 7, 19, 27, 44, 31, 59, 92, 64, 75, 116, 176, 135, 204, 304, 448, 8, 22, 32, 52, 38,  $\cdots$   & -\\ \hline
$\Omega(S)$ & 0, 1, 2, 2, 3, 3, 3, 4, 4, 4, 4, 4, 5, 5, 5, 5, 5, 5, 5, 6, 6, 6, 6, 6, 6, 6, 6, 6, 6, 6, 7, 7, 7, 7, 7, 7, 7, 7, 7, 7, 7, 7, 7, 7, 7, 8, 8, 8, 8, 8, $\cdots$   & - \\ \hline
$\omega(S)$ & 0, 1, 1, 2, 1, 2, 3, 1, 2, 2, 3, 4, 1, 2, 2, 3, 3, 4, 5, 1, 2, 2, 3, 2, 3, 4, 3, 4, 5, 6, 1, 2, 2, 3, 2, 3, 4, 3, 3, 4, 5, 4, 5, 6, 7, 1, 2, 2, 3, 2, $\cdots$   & - \\ \hline
$W_v(S)$ & 1, 1, 1, 2, 1, 2, 3, 1, 2, 3, 4, 6, 1, 2, 3, 4, 5, 7, 10, 1, 2, 3, 4, 4, 6, 8, 7, 10, 14, 20, 1, 2, 3, 4, 4, 6, 8, 7, 8, 11, 15, 13, 18, 25, 35, 1, 2, 3, 4, 4, $\cdots$   & -\\ \hline
$W_e(S)$ & 0, 1, 1, 2, 1, 3, 6, 1, 3, 4, 7, 12, 1, 3, 5, 8, 11, 18, 30, 1, 3, 5, 8, 6, 12, 19, 15, 24, 38, 60, 1, 3, 5, 8, 7, 13, 20, 16, 19, 30, 46, 37, 58, 90, 140, 1, 3, 5, 8, 7, $\cdots$   & -\\ \hline
$\Delta(S)$ & 0, 1, 2, 2, 2, 3, 3, 2, 3, 4, 4, 4, 2, 3, 4, 4, 5, 5, 5, 2, 3, 4, 4, 4, 5, 5, 6, 6, 6, 6, 2, 3, 4, 4, 4, 5, 5, 5, 6, 6, 6, 7, 7, 7, 7, 2, 3, 4, 4, 4, $\cdots$   & -\\ \hline
$|P^H(S)|$ & 1, 1, 1, 2, 1, 3, 6, 1, 4, 6, 12, 24, 1, 5, 10, 20, 30, 60, 120, 1, 6, 15, 30, 20, 60, 120, 90, 180, 360, 720, 1, 7, 21, 42, 35, 105, 210, 140, 210, 420, 840, 630, 1260, 2520, 5040, 1, 8, 28, 56, 56,  $\cdots$   & A078760\\ \hline
$|V_E(S)|$ & 1, 1, 2, 2, 2, 3, 4, 3, 4, 5, 6, 8, 3, 5, 6, 8, 9, 12, 16, 4, 6, 8, 10, 8, 12, 16, 14, 18, 24, 32, 4, 7, 9, 12, 10, 15, 20, 16, 18, 24, 32, 27, 36, 48, 64, 5, 8, 11, 14, 12, $\cdots$   & - \\ \hline
$|V_O(S)|$ & 0, 1, 1, 2, 2, 3, 4, 2, 4, 4, 6, 8, 3, 5, 6, 8, 9, 12, 16, 3, 6, 7, 10, 8, 12, 16, 13, 18, 24, 32, 4, 7, 9, 12, 10, 15, 20, 16, 18, 24, 32, 27, 36, 48, 64, 4, 8, 10, 14, 12, $\cdots$   & -\\ \hline
$|E_E(S)|$ & 0, 1, 1, 2, 2, 4, 6, 2, 5, 6, 10, 16, 3, 7, 9, 14, 17, 26, 40, 3, 8, 11, 18, 12, 23, 36, 27, 42, 64, 96, 4, 10, 14, 22, 16, 30, 46, 32, 38, 58, 88, 68, 102, 152, 224, 4, 11, 16, 26, 19,  $\cdots$   & -\\ \hline
$|E_O(S)|$ & 0, 0, 1, 2, 1, 3, 6, 2, 5, 6, 10, 16, 2, 6, 8, 14, 16, 26, 40, 3, 8, 11, 18, 12, 23, 36, 27, 42, 64, 96, 3, 9, 13, 22, 15, 29, 46, 32, 37, 58, 88, 67, 102, 152, 224, 4, 11, 16, 26, 19,  $\cdots$   & -\\ \hline

$|E^T(S)|$ & 0, 1, 3, 5, 6, 12, 19, 10, 22, 27, 42, 65, 15, 35, 48, 74, 90, 138, 211, 21, 51, 75, 115, 84, 156, 238, 189, 288, 438, 665, 28, 70, 108, 165, 130, 240, 365, 268, 324, 492, 746, 594, 900, 1362, 2059, 36, 92, 147, 224, 186,  $\cdots$   & - \\ \hline
$|P^T(S)|$ & 1, 1, 2, 3, 4, 8, 13, 8, 20, 26, 44, 75, 16, 48, 76, 132, 176, 308, 541, 32, 112, 208, 368, 252, 604, 1076, 818, 1460, 2612, 4683, 64, 256, 544, 976, 768, 1888, 3408, 2316, 3172, 5740, 10404, 7880, $\cdots$   & -  \\ \hline
\end{tabular} }
\end{table}

%\begin{acknowledgements}\label{ackref}
%The \verb"acknowledgements" environment may be used to acknowledge
%indebtedness to colleagues, host institutions and referees. Accounts
%of grants and financial support should be made as a footnote on the
%title page using the \verb"\extraline{}" command in the preamble.
%\end{acknowledgements}

\affiliationone{% in this example, two authors share an institution
  S.-H. Cha\\
  Computer Science Department\\
  Pace University\\
  One Pace Plaza, New York, NY 10038 USA\\
   \email{scha@pace.edu}}
\affiliationtwo{% in this example, one author has two addresses}
   E. G. DuCasse and L.V. Quintas\\
  Mathematics Department\\
  Pace University\\
  One Pace Plaza, New York, NY 10038 USA\\
   \email{educasse@pace.edu\\
     lvquintas@gmail.com}}
\end{document}